\newtheorem{theorem}{Theorem}[section]
\theoremstyle{definition}
\newtheorem{definition}[theorem]{Definition}
\newtheorem{example}[theorem]{Example}
\newtheorem{corollary}[theorem]{Corollary}
\newtheorem{lem}[theorem]{Lemma}
\theoremstyle{remark}
\numberwithin{equation}{section}
\begin{document}

\title{     multiplication $(m,n)$-hypermodules  
 }

\author{Mahdi  Anbarloei}
\address{Department of Mathematics, Faculty of Sciences,
Imam Khomeini International University, Qazvin, Iran.
}

\email{m.anbarloei@sci.ikiu.ac.ir}


\subjclass[2010]{ 16Y99, 20N20 }


\keywords {$n$-ary prime hyperideal, maximal hyperideal, multiplication $(m,n)$-hypermodule.}

\begin{abstract}
  The concept of   multiplication $(m,n)$-hypermodules was introduced   by Ameri and  Norouzi  in \cite{sorc2}. Here we intend to investigate extensively the multiplication $(m,n)$-hypermodules. Let $(M,f,g)$ be a $(m,n)$-hypermodule (with canonical $(m,n)$-hypergroups) over a commutative Krasner $(m,n)$-hyperring $(R,h,k)$. A $(m, n)$-hypermodule $(M, f, g)$ over $(R, h, k)$ is called a multiplication $(m, n)$-hypermodule if for each subhypermodule $N$ of $M$, there exists a hyperideal $I$ of $R$ such that $N =g(I, 1^{(n-2)}, M)$. 
\end{abstract}
\maketitle
\section{Introduction}
Algebraic hyperstructures represent a natural extension of classical algebraic structures. Introduction of the structures was first done by Marty at the $8^{th}$ Congress of Scandinavian Mathematicians in 1934 \cite {1}. Later on,various authors studied it such as by Mittas \cite{2},\cite{3} and Corsini \cite{4},\cite{5}.
 In 1904, Kasner introduce the notion of n-ary algebras in a lecture in a annual meeting \cite{6}. It is worth noting that Dorente wrote the first paper on the theory of n-ary groups in 1928 \cite{7}.
The notion of Krasner hyperrings was introduced by Krasner for the
first time in \cite{8}. Also, we can see some properties on Krasner hyperrings in \cite{9} and \cite{10}. In \cite{11}, Davvaz and Vougiouklis defined the notion of $n$-ary hypergroups which is a generalization of hypergroups in the sense of Marty. The concept of $(m,n)$-hyperrings was introduced in \cite{12}. Davvaz et al. introduced Krasner $(m, n)$- hyperrings as a generalization of $(m, n)$-rings and studied some results in this context in \cite{13}.  The notions of   maximal hyperideal, $n$-ary prime hyperideal, $n$-ary primary hyperideal and the radical of a hyperideal in a Krasner $(m,n)$-hyperring were introduced in \cite{sorc1}. Also, Ostadhadi and Davvaz studied the isomorphism theorems of ring theory and Krasner hyperring theory which are derived in the context of Krasner $(m, n)$-hyperrings in \cite{15}.
Anvariyeh et al  defined the notion of $(m, n)$-
hypermodules over $(m, n)$-hyperrings in \cite{16}. After, the concepts of    free and   canonical $(m, n)$-hypermodules were definded and studied in \cite{17} and \cite{18}.
In \cite{sorc2}, Ameri and Norouzi introduced a class of the $(m,n)$-hypermodules, which they defined as multiplication $(m,n)$-hypermodules. The $(m, n)$-hypermodule $\linebreak$ $(M, f, g)$ over $(R, h, k)$ is called a multiplication $(m, n)$-
hypermodule if for each subhypermodule $N$ of $M$, there exists a hyperideal $I$ of $R$ such that $N =g(I, 1^{(n-2)}, M)$. 
 
 The paper is orgnized as follows. In section 2, we have given some basic definitions and  results of $n$-ary hyperstructures which we need to develop our paper. In section 3, we have proved some important results of the multiplications $(m,n)$-hypermodule.
  In section 4, we have studied  the primary subhypermodules
in  a multiplication $(m,n)$-hyperodule. 
  In section 5, we  have investigated
 intersections and sums of the multiplication $(m,n)$-hypermodules.
\section{Preliminaries}
In this section, we recall some basic terms and definitions  concerning $n$-ary hyperstructures which we need to develop our paper.\\
A mapping $f : H^n \longrightarrow P^*(H)$
 is called an $n$-ary hyperoperation, where $P^*(H)$ is the
set of all the non-empty subsets of $H$. An algebraic system $(H, f)$, where $f$ is an $n$-ary hyperoperation defined on $H$, is called an $n$-ary hypergroupoid. \\
We shall use the following abbreviated notation:\\
The sequence $x_i, x_{i+1},..., x_j$ 
will be denoted by $x^j_i$. For $j < i$, $x^j_i$ is the empty symbol. In this convention

$f(x_1,..., x_i, y_{i+1},..., y_j, z_{j+1},..., z_n)$\\
will be written as $f(x^i_1, y^j_{i+1}, z^n_{j+1})$. In the case when $y_{i+1} =... = y_j = y$ the last expression will be written in the form $f(x^i_1, y^{(j-i)}, z^n_{j+1})$. 
 
For non-empty subsets $A_1,..., A_n$ of $H$ we define

$f(A^n_1) = f(A_1,..., A_n) = \bigcup \{f(x^n_1) \ \vert \  x_i \in  A_i, i = 1,..., n \}$.\\
An n-ary hyperoperation $f$ is called associative if

$f(x^{i-1}_1, f(x_ i ^{n+i-1}), x^{2n-1}_{n+i}) = f(x^{j-1}_1, f(x_j^{n+j-1}), x_{n+j}^{2n-1}),$ \\
hold for every $1 \leq i < j \leq n$ and all $x_1, x_2,..., x_{2n-1} \in H$. An $n$-ary hypergroupoid with the
associative $n$-ary hyperoperation is called an $n$-ary semihypergroup. 

An $n$-ary hypergroupoid $(H, f)$ in which the equation $b \in f(a_1^{i-1}, x_i, a_{ i+1}^n)$ has a solution $x_i \in H$
for every $a_1^{i-1}, a_{ i+1}^n,b  \in H$ and $1 \leq i \leq n$, is called an $n$-ary quasihypergroup, when $(H, f)$ is an $n$-ary
semihypergroup, $(H, f)$ is called an $n$-ary hypergroup.  

An $n$-ary hypergroupoid $(H, f)$ is commutative if for all $ \sigma \in \mathbb{S}_n$, the group of all permutations of $\{1, 2, 3,..., n\}$, and for every $a_1^n \in H$ we have $f(a_1,..., a_n) = f(a_{\sigma(1)},..., a_{\sigma(n)})$.
If an $a_1^n \in H$ we denote $a_{\sigma(1)}^{\sigma(n)}$ as the $(a_{\sigma(1)},..., a_{\sigma(n)})$.  We assume throughout this paper that all Krasner $(m,n)$-hyperrings are commutative.

If $f$ is an $n$-ary hyperoperation and $t = l(n- 1) + 1$, then $t$-ary hyperoperation $f_{(l)}$ is given by

$f_{(l)}(x_1^{l(n-1)+1}) = f(f(..., f(f(x^n _1), x_{n+1}^{2n -1}),...), x_{(l-1)(n-1)+1}^{l(n-1)+1})$. 
\begin{definition}
(\cite{13}). Let $(H, f)$ be an $n$-ary hypergroup and $B$ be a non-empty subset of $H$. $B$ is called
an $n$-ary subhypergroup of $(H, f)$, if $f(x^n _1) \subseteq B$ for $x^n_ 1 \in B$, and the equation $b \in f(b^{i-1}_1, x_i, b^n _{i+1})$ has a solution $x_i \in B$ for every $b^{i-1}_1, b^n _{i+1}, b \in B$ and $1 \leq i  \leq n$.\\
An element $e \in H$ is called a scalar neutral element if $x = f(e^{(i-1)}, x, e^{(n-i)})$, for every $1 \leq i \leq n$ and
for every $x \in H$. 

An element $0$ of an $n$-ary semihypergroup $(H, g)$ is called a zero element if for every $x^n_2 \in H$ we have
$g(0, x^n _2) = g(x_2, 0, x^n_ 3) = ... = g(x^n _2, 0) = 0$.
If $0$ and $0^ \prime $are two zero elements, then $0 = g(0^ \prime , 0^{(n-1)}) = 0 ^ \prime$  and so the zero element is unique. 
\end{definition}
\begin{definition}
(\cite{l1}). Let $(H, f)$ be a  $n$-ary hypergroup. $(H, f)$ is called a canonical $n$-ary
hypergroup if\\
(1) there exists a unique $e \in H$, such that for every $x \in H, f(x, e^{(n-1)}) = x$;\\
(2) for all $x \in H$ there exists a unique $x^{-1} \in H$, such that $e \in f(x, x^{-1}, e^{(n-2)})$;\\
(3) if $x \in f(x^n _1)$, then for all $i$, we have $x_i \in  f(x, x^{-1},..., x^{-1}_{ i-1}, x^{-1}_ {i+1},..., x^{-1}_ n)$.

We say that e is the scalar identity of $(H, f)$ and $x^{-1}$ is the inverse of $x$. Notice that the inverse of $e$ is $e$.
\end{definition}
\begin{definition}
(\cite{13})A Krasner $(m, n)$-hyperring is an algebraic hyperstructure $(R, f, g)$ which
satisfies the following axioms:\\
(1) $(R, f$) is a canonical $m$-ary hypergroup;\\
(2) $(R, g)$ is a $n$-ary semigroup;\\
(3) the $n$-ary operation $g$ is distributive with respect to the $m$-ary hyperoperation $f$ , i.e., for every $a^{i-1}_1 , a^n_{ i+1}, x^m_ 1 \in R$, and $1 \leq i \leq n$,

$g(a^{i-1}_1, f(x^m _1 ), a^n _{i+1}) = f(g(a^{i-1}_1, x_1, a^n_{ i+1}),..., g(a^{i-1}_1, x_m, a^n_{ i+1}))$;\\
(4) $0$ is a zero element (absorbing element) of the $n$-ary operation $g$, i.e., for every $x^n_ 2 \in R$ we have 

$g(0, x^n _2) = g(x_2, 0, x^n _3) = ... = g(x^n_ 2, 0) = 0$.
\end{definition}
A non-empty subset $S$ of $R$ is called a subhyperring of $R$ if $(S, f, g)$ is a Krasner $(m, n)$-hyperring. Let
$I$ be a non-empty subset of $R$, we say that $I$ is a hyperideal of $(R, f, g)$ if $(I, f)$ is an $m$-ary subhypergroup
of $(R, f)$ and $g(x^{i-1}_1, I, x_{i+1}^n) \subseteq I$, for every $x^n _1 \in  R$ and  $1 \leq i \leq n$.
\begin{definition} (\cite{16})
Let $M$ be a nonempty set. Then $(M, f, g)$ is an $(m, n)$-hypermodule over an $(m, n)$-
hyperring $(R, h, k)$, if $(M, f)$ is an $m$-ary hypergroup and the map 

$g:\underbrace{R \times ... \times R}_{n-1} \times M\longrightarrow 
P^*(M)$\\
statisfied the following conditions:

$(i)\  g(r_1^{n-1},f(x_1^m))=f(g(r_1^{n-1}
,x_1),...,g(r_1^{n-1}
,x_m))$

$(ii)\  g(r_1^{i-1},h(s_1^m),r_{i+1}^{n-1},x)=f(g(r_1^{i-1}
,s_1,r_{i+1}^{n-1},x),...,g(r_1^{i-1}
s_m,r_{i+1}^{n-1},x))$

$(ii)\  g(r_1^{i-1},k(r_i^{i+n-1}),r_{i+m}^{n+m-2},x)=
g(r_1^{n-1},g(r_m^{n+m-2},x))$

$ (iv) \ \{0\}=g(r_1^{i-1},0,r_{i+1}^{n-1},x)$.
\end{definition} 
If $g$ is an $n$-ary hyperoperation, $T_1,...,T_{n-1}$ are subsets of $R$ and $M^\prime \subseteq M$, we set

$g(T_1^{n-1},M^\prime)=\bigcup \{g(r_1^{n-1},m)\ \vert \ r_i \in T_i, 1 \leq i \leq n-1, m \in M^\prime \}.$\\
A canonical $(m,n)$-hypermodule $(M,f,g)$ is an $(m,n)$-ary hypermodule with a canonical m-ary hypergroup $(M,f)$ over a Krasner $(m,n)$-hyperring $(R,h,k)$.\\

Let $(M, f, g)$ be an $(m, n)$-hypermodule over $(R, h, k)$. A non-empty subset $N$ of $M$ is said to be  an $(m,n)$-subhypermodule of $M$ if $(N,f)$ is a $m$-ary subhypergroup of $(M,f)$ and $g(R^{(n-1)},N) \in P^*(N)$.

Let $(M, f, g)$ be an $(m, n)$-hypermodule and $N$ be a subhypermodule of $M$. Then the hyperideal $S_N$ is considered as follows:

$S_N=\{r \in R \ \vert \ g(r,1^{(n-2)},M) \subseteq N\}$
\begin{definition} (\cite{sorc3})
Let $M$ be an $(m, n)$-hypermodule over  $(R, h, k)$. A proper subhypermodule $K$ of $M$ is said to be
maximal, if for $N \leq M$ with $K \subseteq  N \subseteq  M$, we have either $K = N$ or $N = M$. 
\end{definition} 

\begin{definition} (\cite{sorc3})
Let $M$ be an $(m, n)$-hypermodule over  $(R, h, k)$. A proper subhypermodule $N$ of $M$ is said to be
$n$-ary primary, if $g(r_1^{n-1},x)\subseteq Q$ with $r_1^{n-1} \in R$ and $x \in M \backslash Q$, implies that

$g(k(r_1^{(t)},1^{(n-t)}),...,k(r_{n-1}^{(t)},1^{(n-t)}),M) \subseteq N$\\
for $t \leq n$, and for $t>n$ such that $t=l(n-1)+1$

$g(k_{(l)}(r_1^{(t)}),...,k_{(l)}(r_{n-1}^{(t)}),M) \subseteq N$\\
for some $t \in \mathbb{N}$ and $l >0$.  

\end{definition} 
\begin{definition}(\cite{sorc3})
Let $N$ be a subhypermodule of an  $(m, n)$-hypermodule $(M, f, g)$ over  $(R, h, k)$. Then the set

$R/N = \{f(x^{i-1}_1, N, x^m_{i+1}) \ \vert \  x^{i-1}_1,x^m_{i+1} \in M \}$\\
endowed with m-ary hyperoperation $f$ which for all $x_{11}^{1m},...,x_{m1}^{mm} \in R$

$F(f(x_{11}^{1 (i-1)}, N, x^{1m}_ {1(i+1)}),..., f(x_{m1}^{ m(i-1)}, N, x^{mm}_ {m(i+1)}))$ 

$\hspace{0.3cm}= \{ (f(t_1^{i-1}, N,t_{i+1}^m ) \ \vert \ t_1 \in f(x_{11}^{m1}),...,t_m \in  f(x_{1m}^{mm})\}$\\
and with $n$-ary hyperoperation $G:\underbrace{R \times ... \times R}_{n-1} \times M/N\longrightarrow 
 P^*(M/N)$ which for all $x_1^{i-1},x_{i+1}^m \in M$ and $r_1^{n-1} \in R$

$G(r_1^{n-1},f(x_1^{i-1},N,x_{i+1}^m))$

$\hspace{0.3cm}=\{f(z_1^{i-1},N,z_{i+1}^m\ \vert \ z_1 \in g(r_1^{n-1},x_1),..., z_m \in g(r_1^{n-1},x_m)\}$\\
is an  $(m, n)$-hypermodule over  $(R, h, k)$, and $(R/N, F, G)$  is called the quotient $(m, n)$-hypermodule  of $M$ by $N$. 
\end{definition} 

\begin{definition} (\cite{sorc4})
Let $(M, f, g)$ be an $(m, n)$-hypermodule over  $(R, h, k)$ and $P$ be a maximal hyperideal of $R$. Then

 (1) we define
\[\mathcal{X}_P(M) = \{m \in M \ \vert \  \exists p \in P; 0 \in g(h(1, -p, 0^{(m-2)}), 1^{( n-2)}, m)\}\]
 \[\mathcal{X}^P(M) = \{m \in M \ \vert \  \exists p \in P; \{0\} = g(h(1, -p, 0^{(m-2)}), 1^{( n-2)}, m)\}.\]
 In this paper we let $\mathcal{X}_P(M) = \mathcal{X}^P(M) $ for every maximal hyperideal $P$ of $R$.

(2) $M$ is called an n-ary $P$-cyclic if there exist $q \in  P$ and $m \in M$ such that $g(h(1, -q, 0^{(m-2)}),
1^{(n-2)}, M) \subseteq g(R, 1^{( n-2)}, m)$.

  Also, we let  $x \in  g(I, 1^{(n-2)}, m)$ implies that there exists $a \in I$ such that $\{x\} = g(a, 1^{(n-2)}, m)$,
and $\{0\} = h(r,-r, 0^{(m-2)})$, for all $x, m \in  M$, $r\in R$ and $I \subseteq R$. 
\end{definition}
\begin{definition} (\cite{sorc2})
For every nonzero element $m$ of $(m, n$)-hypermodule $(M, f , g)$ over $(R, h, k)$, we define

$F_m
= \{r \in  R  \ \vert \ 0 \in  g(r, 1^ {(n-2)}, m);  r \neq  0\}.$\\
It is clear that $F_m$ is a hyperideal of $(R, h, k)$.  The $(m, n)$-hypermodule $(M, f , g)$ is said to be  faithful,
if $F_m = \{0\}$ for all nonzero elements $m \in  M$, that is
$0 \in  g(r, 1^{( n-2)}, m)$ implies that  $r = 0$, for $r \in  R$. 
\end{definition}
\section{the multiplication $(m,n)$-hypermodules }
 The notion of multiplication  $(m,n)$-hypermodules was defined in \cite{sorc2}.  The $(m, n)$-hypermodule $(M, f, g)$ over $(R, h, k)$ is called a multiplication $(m, n)$-
hypermodule if for each subhypermodule $N$ of $M$, there exists a hyperideal $I$ of $R$ such that $N =g(I, 1^{(n-2)}, M)$. 
 \begin{example}
 $(m, n)$-hypermodule $(\mathbb{Z}, f, g)$ over $(m, n)$-hyperring $(\mathbb{Z}, h, k)$ is multiplication. (see Example 3.5 in \cite{sorc2})
  \end{example}
 
\begin{lem} \label{11}
Let $(M,f,g)$ be a multiplication  $(m,n)$-hypermodule over $(R,h,k)$ and let $J_{(m,n)}(R)$ be intersection of all maximal hyperideals of $R$. If   $\linebreak$ $M=g(I,1^{(n-2)},M)$  for some hyperideal $I \subseteq J_{(m,n)}(R)$, then $M=0$.
\end{lem}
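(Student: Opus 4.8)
The plan is to argue by contradiction. Assume $M\neq\{0\}$; I will locate a maximal hyperideal $P$ of $R$ over which the $n$-ary $P$-cyclic structure forced by the multiplication hypothesis is incompatible with $M=g(I,1^{(n-2)},M)$. Fix a nonzero $x\in M$. Since $g(1,1^{(n-2)},x)=\{x\}\not\ni 0$, the hyperideal $\mathrm{ann}_R(x):=\{r\in R\mid g(r,1^{(n-2)},x)=\{0\}\}=F_x\cup\{0\}$ is proper, so it lies in some maximal hyperideal $P$; note $\mathrm{ann}_R(M)\subseteq\mathrm{ann}_R(x)\subseteq P$. Next check that $x\notin\mathcal{X}_P(M)$: otherwise there are $p\in P$ and $u\in h(1,-p,0^{(m-2)})$ with $0\in g(u,1^{(n-2)},x)$, i.e. $u\in\mathrm{ann}_R(x)\subseteq P$; but then, by reversibility in the canonical $m$-ary hypergroup $(R,h)$, $1\in h(u,p,0^{(m-2)})\subseteq P$, a contradiction. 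Hence $\mathcal{X}_P(M)\neq M$.

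By the structural dichotomy for multiplication hypermodules (at each maximal hyperideal $P$, either $\mathcal{X}_P(M)=M$ or $M$ is $n$-ary $P$-cyclic), $M$ is then $n$-ary $P$-cyclic: there are $q\in P$ and $m_0\in M$ with $g(h(1,-q,0^{(m-2)}),1^{(n-2)},M)\subseteq g(R,1^{(n-2)},m_0)$. Now feed in $M=g(I,1^{(n-2)},M)$ with $I\subseteq J_{(m,n)}(R)\subseteq P$: using commutativity of $k$, the mixed associativity of the action, and the fact that the hyperideal $I$ absorbs products, one upgrades the inclusion to $g(h(1,-q,0^{(m-2)}),1^{(n-2)},M)\subseteq g(I,1^{(n-2)},m_0)$. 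Take $u\in h(1,-q,0^{(m-2)})$; the scalar product $g(u,1^{(n-2)},m_0)$ is a singleton (standing convention) contained in $g(I,1^{(n-2)},m_0)$, so, again by the standing convention, $g(u,1^{(n-2)},m_0)=g(a,1^{(n-2)},m_0)$ for some $a\in I$, which yields $v\in h(u,-a,0^{(m-2)})$ with $g(v,1^{(n-2)},m_0)=\{0\}$. Since $q\in P$ forces $u\notin P$, and then $a\in I\subseteq P$ forces $v\notin P$. Put $e:=k(v,u,1^{(n-2)})$. Then $g(e,1^{(n-2)},M)=g(v,1^{(n-2)},g(u,1^{(n-2)},M))\subseteq g(v,1^{(n-2)},g(R,1^{(n-2)},m_0))=g(R,1^{(n-2)},g(v,1^{(n-2)},m_0))=\{0\}$, so $e\in\mathrm{ann}_R(M)$. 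But $P$ is a prime hyperideal and $u,v\notin P$, hence $e=k(v,u,1^{(n-2)})\notin P$, contradicting $\mathrm{ann}_R(M)\subseteq P$. Therefore $M=\{0\}$.

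The \emph{main obstacle} is the appeal to the structural dichotomy --- the step ``$\mathcal{X}_P(M)\neq M\Rightarrow M$ is $n$-ary $P$-cyclic'' --- which is exactly where the multiplication hypothesis on $M$ substitutes for the finite-generation hypothesis of the classical Nakayama lemma; everything else is routine bookkeeping with $f,g,h,k$ and the conventions recorded in the preliminaries (single-valuedness of the scalar products that occur, $\{0\}=h(r,-r,0^{(m-2)})$, and ``$w\in g(I,1^{(n-2)},m)\Rightarrow\{w\}=g(a,1^{(n-2)},m)$ for some $a\in I$''). One can also phrase the first two paragraphs through maximal subhypermodules: the computation above, in the case $I=P$, shows that $g(P,1^{(n-2)},M)\neq M$, so $g(P,1^{(n-2)},M)$ is a proper subhypermodule and $M=g(I,1^{(n-2)},M)\subseteq g(P,1^{(n-2)},M)\subsetneq M$ is a contradiction; alternatively, for a maximal subhypermodule $K$ of $M$ one checks $S_K=\mathrm{ann}_R(M/K)$, that $M/K$ is simple hence $S_K$ is a maximal hyperideal, and that $K=g(S_K,1^{(n-2)},M)$, and concludes in the same way from $I\subseteq J_{(m,n)}(R)\subseteq S_K$.
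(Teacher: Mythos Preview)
Your argument is correct, but it follows a genuinely different route from the paper's.

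The paper proves the lemma \emph{directly} and element-wise, without any appeal to the $n$-ary $P$-cyclic dichotomy. For an arbitrary $m\in M$ it uses the multiplication hypothesis to write $g(R,1^{(n-2)},m)=g(A,1^{(n-2)},M)$ for some hyperideal $A$, and then the chain
\[
m\in g(R,1^{(n-2)},m)=g(A,1^{(n-2)},M)=g(A,1^{(n-2)},g(I,1^{(n-2)},M))=g(I,1^{(n-2)},g(R,1^{(n-2)},m))=g(I,1^{(n-2)},m)
\]
gives $m\in g(r,1^{(n-2)},m)$ for some $r\in I\subseteq J_{(m,n)}(R)$, hence $0\in g(h(1,-r,0^{(m-2)}),1^{(n-2)},m)$. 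The key external input is then Corollary~4.15 of \cite{sorc1}: for $r\in J_{(m,n)}(R)$ the element $h(1,-r,0^{(m-2)})$ is invertible, and multiplying by an inverse forces $m=0$. This is exactly the classical Nakayama argument transported to the $(m,n)$-setting.

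Your proof instead localises at a maximal hyperideal $P$ containing $\mathrm{ann}_R(x)$ for a nonzero $x$, invokes Theorem~3.8 of \cite{sorc2} (the dichotomy ``$\mathcal{X}_P(M)=M$ or $M$ is $n$-ary $P$-cyclic'') to get a $P$-cyclic generator $m_0$, and then produces an element $e=k(v,u,1^{(n-2)})\in\mathrm{ann}_R(M)\setminus P$ via the primeness of $P$. The bookkeeping with the conventions of Definition~2.8 is handled correctly. What you gain is a proof that illustrates how the $P$-cyclic structure substitutes for finite generation; what you pay is reliance on the heavier structural result Theorem~3.8 of \cite{sorc2}, whereas the paper only needs the elementary fact about units in $h(1,-J_{(m,n)}(R),0^{(m-2)})$. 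Your closing alternative sketch via maximal subhypermodules is plausible but less cleanly justified than your main argument; the main argument stands on its own.
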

\begin{proof}
If $m \in M$ then $g(R,1^{(n-2)},m)=g(A,1^{(n-2)},M)$ for some hyperideal $A$ of $R$. Then we get

$\hspace{1cm}
m \in g(1^{(n-1)},m) \subseteq g(R,1^{(n-2)},m)$

$\hspace{3.7cm}=g(A,1^{(n-2)},M)$

$\hspace{3.7cm}=g(A,1^{(n-2)},g(I,1^{(n-2)},M))$

$\hspace{3.7cm}=g(k(A,I,1^{(n-2)}),1^{(n-2)},M)$

$\hspace{3.7cm}=g(I,1^{(n-2)},g(A,1^{(n-2)},M))$

$\hspace{3.7cm}=g(I,1^{(n-2)},g(R,1^{(n-2)},m)$

$\hspace{3.7cm}=g(k(I,R,1^{(n-2)}),1^{(n-2)},m)$

$\hspace{3.7cm}=g(I,1^{(n-2)},m)$.\\
It imlies that there exists some $r \in I$ such that $m \in f(g(r,1^{(n-2)},m),0^{(m-1)})$. Then we get\\
$0 \in f(-g(r,1^{(n-2)},m),m,0^{(m-2)}) \subseteq f(g(-r,1^{(n-2)},m),g(1^{(n-1)},m),g(0,1^{(m-2)},m))$

$\hspace{4.9cm}=g(h(-r,1,0^{(m-2)}),1^{(n-2)},m)$.\\
By Corollary 4.15 in \cite{sorc1}, Since $r \in J_{(m,n)}(R)$ then $h(-r,1,0^{(m-2)})$ is invertible which means  there exists  $s \in R$ such that $k(s,h(-r,1,0^{(m-2)}),1^{(n-2)})=1$. Thus 

$\hspace{1.2cm}\{ 0\}=g(s,1^{(n-2)},0) \subseteq g(s,1^{(n-2)},g(h(-r,1,0^{(m-2)}),1^{(n-2)},m))$

$\hspace{4cm}=g(k(s,h(-r,1,0^{(m-2)}),1^{(n-2)}),1^{(n-2)},m)$

$\hspace{4cm}=g(1,1^{(n-2)},m)$

$\hspace{4cm}=g(1^{(n-1)},m)$

$\hspace{4cm}=\{m\}$.\\
Hence $m=0$ which implies $M=0$.
\end{proof}
\begin{theorem} \label{12}
Let $(M,f,g)$ be a nonzero multiplication $(m,n)$-hypermodule over $(R,h,k)$ such that  $R$ has  the only one maximal hyperideal. Then $M$ is cyclic. 
\end{theorem}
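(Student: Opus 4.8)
The plan is to use the fact that a hyperring with a unique maximal hyperideal $P$ has $J_{(m,n)}(R)=P$, and then to combine this with Lemma \ref{11} to locate a generator of $M$ outside $g(P,1^{(n-2)},M)$.

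First I would show that $M\neq g(P,1^{(n-2)},M)$, where $P$ denotes the unique maximal hyperideal of $R$. Since $P$ is the only maximal hyperideal, $J_{(m,n)}(R)=P$, so in particular $P\subseteq J_{(m,n)}(R)$. If we had $M=g(P,1^{(n-2)},M)$, then Lemma \ref{11} would force $M=0$, contradicting the hypothesis that $M$ is nonzero. Hence there exists an element $m\in M$ with $m\notin g(P,1^{(n-2)},M)$.

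Next I would use that $M$ is a multiplication $(m,n)$-hypermodule to write the cyclic subhypermodule $g(R,1^{(n-2)},m)$ in the form $g(I,1^{(n-2)},M)$ for some hyperideal $I$ of $R$. Since $m\in g(1^{(n-1)},m)\subseteq g(R,1^{(n-2)},m)=g(I,1^{(n-2)},M)$, the inclusion $I\subseteq P$ would give $m\in g(P,1^{(n-2)},M)$, contradicting the choice of $m$; therefore $I\not\subseteq P$. Because $P$ is the unique maximal hyperideal of $R$ and every proper hyperideal of a Krasner $(m,n)$-hyperring is contained in some maximal hyperideal, a hyperideal that is not contained in $P$ cannot be proper, so $I=R$. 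Consequently $g(R,1^{(n-2)},m)=g(R,1^{(n-2)},M)$, and since $M=g(1^{(n-1)},M)\subseteq g(R,1^{(n-2)},M)\subseteq M$, we get $M=g(R,1^{(n-2)},m)$, i.e.\ $M$ is cyclic.

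The only delicate point is the implication ``$I\not\subseteq P\Rightarrow I=R$'', which rests on the standard Zorn's-lemma fact for Krasner $(m,n)$-hyperrings (cf.\ \cite{sorc1}) that every proper hyperideal lies in a maximal hyperideal; given the uniqueness of $P$, this is immediate. All remaining manipulations — expanding $g(I,1^{(n-2)},M)$, using $g(1^{(n-1)},x)=\{x\}$, and the mixed-associativity identity $g(k(A,B,1^{(n-2)}),1^{(n-2)},-)=g(A,1^{(n-2)},g(B,1^{(n-2)},-))$ — are exactly of the kind already carried out in the proof of Lemma \ref{11}.
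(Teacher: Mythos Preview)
Your proof is correct and follows essentially the same approach as the paper: use Lemma \ref{11} to see that $g(P,1^{(n-2)},M)\neq M$, pick $m\notin g(P,1^{(n-2)},M)$, write $g(R,1^{(n-2)},m)=g(I,1^{(n-2)},M)$ by the multiplication hypothesis, and argue that $I\not\subseteq P$ (equivalently, $I=R$) to conclude. The only cosmetic difference is notation (the paper calls the maximal hyperideal $I$ and the auxiliary hyperideal $J$) and that you make the step ``proper $\Rightarrow$ contained in $P$'' explicit via the Zorn's-lemma fact, whereas the paper simply asserts $J\neq R\Rightarrow J\subseteq I$.
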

\begin{proof}
Let $(M,f,g)$ be a nonzero multiplication $(m,n)$-hypermodule over $(R,h,k)$. Assume that   $I$ is  the only  maximal hyperideal of $R$. The Lemma \ref{11} shows that $g(I,1^{(n-2)},M) \neq M$. Then there exists some $m \in M$ such that $m \notin g(I,1^{(n-2)},M)$. Thus we have $g(R,1^{(n-2)},m)=g(J,1^{(n-2)},M)$ for some hyperideal $J$ of $R$. Let $J \neq R$. It follows that  $J \subseteq I$. Hence we get $g(R,1^{(n-2)},m)=g(J,1^{(n-2)},M) \subseteq g(I,1^{(n-2)},M)$. Since $m \in g(1^{(n-1)},m) \subseteq g(R,1^{(n-2)},m)$, then we conclude that $m \in  g(I,1^{(n-2)},M)$ which is contradiction. Thus $J=R$ which implies $g(R,1^{(n-2)},m)=g(J,1^{(n-2)},M)=g(R,1^{(n-2)},M)=M$ and the proof is completed.
\end{proof}
\begin{theorem} \label{13}
If $(M,f,g)$ is a  multiplication hypermodule over $(R,h,k)$ then 
 $M/N_1$ with $N_1=g(J_{(m,n)}(R),1^{(n-2)},M)$ is a multiplication hypermodule over $(R,h,k)$ such that $J_{(m,n)}(R)$ is intersection of all maximal hyperideals of $R$. 
\end{theorem}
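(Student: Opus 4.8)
The plan is to prove the more general fact that every quotient of a multiplication $(m,n)$-hypermodule is again a multiplication $(m,n)$-hypermodule, and then specialize to $N_1=g(J_{(m,n)}(R),1^{(n-2)},M)$. Note first that $N_1$ is a subhypermodule of $M$: since $J_{(m,n)}(R)$ is a hyperideal of $R$, the set $g(J_{(m,n)}(R),1^{(n-2)},M)$ is closed under $f$ and under $g(R^{(n-1)},-)$, and it contains $0$; hence the quotient $(m,n)$-hypermodule $(M/N_1,F,G)$ is well defined. I would then introduce the canonical projection $\pi\colon M\longrightarrow M/N_1$ sending $x$ to its coset $\pi(x)=f(x,N_1,0^{(m-2)})$, and check straight from the definitions of $F$ and $G$ that $\pi$ is a surjective homomorphism of $(m,n)$-hypermodules, i.e. $\pi(f(x_1^m))=F(\pi(x_1),\dots,\pi(x_m))$ and $\pi(g(r_1^{n-1},x))=G(r_1^{n-1},\pi(x))$ for all $x_1^m,x\in M$, $r_1^{n-1}\in R$. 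Passing to nonempty subsets this yields $\pi\big(g(T_1^{n-1},M')\big)=G\big(T_1^{n-1},\pi(M')\big)$ for $T_i\subseteq R$ and $M'\subseteq M$; in particular $\pi(M)=M/N_1$.

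Next, let $\bar K$ be an arbitrary subhypermodule of $M/N_1$ and set $K=\pi^{-1}(\bar K)=\{x\in M \mid \pi(x)\in \bar K\}$. Since $\bar K$ is an $m$-ary subhypergroup containing the zero coset $\bar 0=N_1$ and $\pi$ is a homomorphism, $K$ is a subhypermodule of $M$ with $N_1\subseteq K$, and $\pi(K)=\bar K$ (the inclusion $\bar K\subseteq\pi(K)$ uses surjectivity of $\pi$). Because $M$ is a multiplication $(m,n)$-hypermodule, there is a hyperideal $I$ of $R$ with $K=g(I,1^{(n-2)},M)$. Applying $\pi$ and the identity from the previous paragraph,
\[
\bar K=\pi(K)=\pi\big(g(I,1^{(n-2)},M)\big)=G\big(I,1^{(n-2)},\pi(M)\big)=G\big(I,1^{(n-2)},M/N_1\big).
\]
Since $\bar K$ was an arbitrary subhypermodule of $M/N_1$, this shows that $M/N_1$ is a multiplication $(m,n)$-hypermodule over $(R,h,k)$.

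The main obstacle is the bookkeeping concealed in the two displayed-type identities. One must unwind the definition of the quotient operations $F$ and $G$ to verify that the coset map genuinely intertwines $g$ with $G$ (checking both inclusions at the level of unions of hyperoperation outputs, and using that $g(r_1^{n-1},0_M)=\{0_M\}$), and that $K=\pi^{-1}(\bar K)$ really satisfies the solvability conditions of Definition~2.1 making it an $m$-ary subhypergroup of $(M,f)$, hence a subhypermodule with $N_1\subseteq K$ and $\pi(K)=\bar K$. Everything after that is a direct application of the multiplication hypothesis on $M$ together with the correspondence $\bar K\leftrightarrow\pi^{-1}(\bar K)$ between subhypermodules of $M/N_1$ and subhypermodules of $M$ containing $N_1$.
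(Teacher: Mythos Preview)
Your proof is correct and follows essentially the same approach as the paper: both use the correspondence between subhypermodules of $M/N_1$ and subhypermodules of $M$ containing $N_1$, apply the multiplication hypothesis to write $K=g(I,1^{(n-2)},M)$, and then verify $\bar K=G(I,1^{(n-2)},M/N_1)$. The paper carries out this last step by an explicit chain of coset equalities rather than through your cleaner projection-homomorphism formulation, and, like your argument, it never actually uses the specific choice of $N_1$, so it too implicitly proves the general quotient statement.
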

\begin{proof}
Let  $N_1=g(J_{(m,n)}(R),1^{(n-2)},M)$ in which $J_{(m,n)}(R)$ is intersection of all maximal hyperideals of $R$. Let $N$ be a subhypermodule of $M$ such that $N_1 \subseteq N$ and $N/N_1$ be a subhypermodule of $M/N_1$. Since $M$ is a multiplication hypermodule over $R$ then there exists some hyperideal $I$ of $R$ such that $N=g(I,1^{(n-2)},M)$. Since \\
$N/N_1=\{f(a_1^{m-1},N_1) \  \ \ \vert a_1^{m-1} \in N\}$

$\hspace*{0.5cm}=\{f(g(r_1,1^{(n-2)},x_1),...,g(r_{m-1},1^{(n-2)},x_{m-1}),N_1)  \vert r_1^{m-1} \in I, x_1^{m-1} \in N\}$

$\hspace*{0.5cm}=\{f(g(r_1,1^{(n-2)},x_1),...,g(r_{m-1},1^{(n-2)},x_{m-1}),f(N_1^{(m-1)},0 ) \vert r_1^{m-1} \in I, $

$\hspace*{0.8cm}x_1^{m-1} \in N\}$

$\hspace*{0.5cm}=\{f(f(g(r_1,1^{(n-2)},x_1),N_1,0^{(m-2)}),...,f(g(r_{m-1},1^{(n-2)},x_{m-1}),N_1 ,0^{(m-2)})$

$\hspace*{0.8cm},0) \vert r_1^{m-1}  \in I, x_1^{m-1} \in N\}$

$\hspace*{0.5cm}=\{f(\{f(t_1,N_1,0^{(m-2)})\ \vert \ t_1 \in g(r_1,1^{(n-2)},x_1)\},...,\{f(t_{m-1},N_1 ,0^{(m-2)})\ \vert \ t_{m-1} $

$\hspace*{0.8cm}\in g(r_{m-1},1^{(n-2)},x_{m-1})\},0) \vert r_1^{m-1}  \in I, x_1^{m-1} \in N\}$

$\hspace*{0.5cm}=F(\{f(t_1,N_1,0^{(m-2)})\ \vert \ t_1 \in g(r_1,1^{(n-2)},x_1)\},...,\{f(t_{m-1},N_1 ,0^{(m-2)})\ \vert \ t_{m-1} $

$\hspace*{0.8cm}\in g(r_{m-1},1^{(n-2)},x_{m-1})\},0)) $

$\hspace*{0.5cm}=F(G(r_1,1^{(n-2)},f(x_1,N_1,0^{(m-2)})) ,...,G(r_{m-1},1^{(n-2)},f(x_{m-1},N_1,0^{(m-2)})),0) $

$\hspace*{0.5cm}=G(r_1^{m-1},F(f(x_1,N_1,0^{(m-2)}),...,f(x_{m-1},N_1,0^{(m-2)}),0))$

$\hspace*{0.5cm}=\bigcup \{G(r,1^{(n-2)},f(x,N_1,0^{(m-2)})) \ \vert \ r=k(r_1^{m-1},1^{n-m+1}), r_1^{m-1} \in I, x \in$

$\hspace*{0.8cm}f(x_1^{m-1},0),x_1^{m-1} \in M\}$

$\hspace*{0.5cm}=G(I,1^{(n-2)},M/N_1)$.\\
Hence $M/N_1$ with $N_1=g(J_{(m,n)}(R),1^{(n-2)},M)$ is a multiplication hypermodule over $(R,h,k)$.
\end{proof}

\begin{theorem} \label{15}
Let $(M,f,g)$ be an $(m,n)$-hypermodule over $(R,h,k)$. $(M,f,g)$ is multiplication if and only if 

1) $\bigcap_{\lambda \in \Lambda} g(I_{\lambda},1^{(n-2)},M)=g(\bigcap_{\lambda \in \Lambda}[f(I_\lambda,S_0,0^{(m-2)}],1^{(n-2)},M)$ for all nonempty family of hyperideals $I_{\lambda}$ of $R$.

2) For every hyperideal $A$ of $R$ and the subhypermodule $N$ of $M$ with $N \subset g(A,1^{(n-2)},M)$, there exists a hyperideal $B \subset A$ such that $N \subseteq g(B,1^{(n-2)},M)$.
\end{theorem}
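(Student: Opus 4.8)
The plan is to derive everything from two facts about a multiplication hypermodule $M$: that every subhypermodule $N$ equals $g(S_N,1^{(n-2)},M)$, where $S_N=\{r\in R\mid g(r,1^{(n-2)},M)\subseteq N\}$; and the ``cancellation'' principle that $g(r,1^{(n-2)},M)\subseteq g(I,1^{(n-2)},M)$ forces $r\in f(I,S_0,0^{(m-2)})$, that is, $S_{g(I,1^{(n-2)},M)}=f(I,S_0,0^{(m-2)})$, where $S_0:=S_{\{0\}}$. The first is immediate: pick a hyperideal $J$ with $N=g(J,1^{(n-2)},M)$, note $J\subseteq S_N$, and sandwich $N=g(J,1^{(n-2)},M)\subseteq g(S_N,1^{(n-2)},M)\subseteq N$. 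I will use throughout the elementary identity $g\bigl(f(I,S_0,0^{(m-2)}),1^{(n-2)},M\bigr)=g(I,1^{(n-2)},M)$, which follows from distributivity of $g$ over $f$ together with $g(S_0,1^{(n-2)},M)=\{0\}=g(0,1^{(n-2)},M)$, and the fact that $I\subseteq f(I,S_0,0^{(m-2)})$ and $S_0\subseteq f(I,S_0,0^{(m-2)})$ (take the remaining argument to be $0$).

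For the forward implication, assume $M$ is multiplication. In (1) the inclusion ``$\supseteq$'' needs no hypothesis: from $\bigcap_\lambda f(I_\lambda,S_0,0^{(m-2)})\subseteq f(I_\mu,S_0,0^{(m-2)})$, apply $g(-,1^{(n-2)},M)$ and the identity above to get $g(\bigcap_\lambda f(I_\lambda,S_0,0^{(m-2)}),1^{(n-2)},M)\subseteq g(I_\mu,1^{(n-2)},M)$, then intersect over $\mu$. For ``$\subseteq$'', set $N:=\bigcap_\lambda g(I_\lambda,1^{(n-2)},M)$, which is a subhypermodule, so $N=g(S_N,1^{(n-2)},M)$; every $r\in S_N$ has $g(r,1^{(n-2)},M)\subseteq N\subseteq g(I_\lambda,1^{(n-2)},M)$ for all $\lambda$, whence cancellation gives $r\in f(I_\lambda,S_0,0^{(m-2)})$, so $S_N\subseteq\bigcap_\lambda f(I_\lambda,S_0,0^{(m-2)})$ and $N=g(S_N,1^{(n-2)},M)\subseteq g(\bigcap_\lambda f(I_\lambda,S_0,0^{(m-2)}),1^{(n-2)},M)$. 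For (2), let $N\subsetneq g(A,1^{(n-2)},M)$; cancellation applied to each $r\in S_N$ (using $g(r,1^{(n-2)},M)\subseteq N\subseteq g(A,1^{(n-2)},M)$) gives $S_N\subseteq f(A,S_0,0^{(m-2)})$, and since $S_0\subseteq S_N$ with $S_N$ an $m$-ary subhypergroup, reversibility in $R$ upgrades this to $S_N\subseteq f(S_N\cap A,S_0,0^{(m-2)})$ (if $r\in f(a,s,0^{(m-2)})$ with $a\in A$, $s\in S_0$, then $a\in f(r,-s,0^{(m-2)})\subseteq S_N$, so $a\in S_N\cap A$). Put $B:=S_N\cap A$; if $B=A$ then $A\subseteq S_N$ and $g(A,1^{(n-2)},M)\subseteq N$, absurd, so $B\subsetneq A$, while $N=g(S_N,1^{(n-2)},M)\subseteq g(f(S_N\cap A,S_0,0^{(m-2)}),1^{(n-2)},M)=g(B,1^{(n-2)},M)$.

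For the converse, assume (1) and (2) and let $N$ be any subhypermodule. Let $\mathcal{A}$ be the collection of hyperideals $A$ of $R$ with $N\subseteq g(A,1^{(n-2)},M)$; it contains $R$. Applying (1) to $\mathcal{A}$ and writing $J:=\bigcap_{A\in\mathcal{A}}f(A,S_0,0^{(m-2)})$ gives $g(J,1^{(n-2)},M)=\bigcap_{A\in\mathcal{A}}g(A,1^{(n-2)},M)\supseteq N$; hence $J\in\mathcal{A}$, $S_0\subseteq J$, and $g(J,1^{(n-2)},M)$ is the $\subseteq$-least of the modules $g(A,1^{(n-2)},M)$, $A\in\mathcal{A}$. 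If $N\subsetneq g(J,1^{(n-2)},M)$, then (2) supplies a hyperideal $B\subsetneq J$ with $N\subseteq g(B,1^{(n-2)},M)$, so $B\in\mathcal{A}$ and therefore $J\subseteq f(B,S_0,0^{(m-2)})\subseteq J$ (the last inclusion because $B,S_0\subseteq J$ and $J$ is a hyperideal), whence $f(B,S_0,0^{(m-2)})=J$ and $g(B,1^{(n-2)},M)=g(J,1^{(n-2)},M)$. Reading (2) in the form $N=g(B,1^{(n-2)},M)$ --- which is surely the intended reading, since with mere inclusion $B$ could be shrunk without end while keeping $g(B,1^{(n-2)},M)$ fixed --- this contradicts $N\subsetneq g(J,1^{(n-2)},M)$; so $N=g(J,1^{(n-2)},M)$ and $M$ is multiplication.

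I expect two steps to be the real work. The main obstacle is the cancellation principle for multiplication hypermodules: following the classical theory of multiplication modules I would establish it from the dichotomy that for each maximal hyperideal $P$ of $R$ either $M=\mathcal{X}_P(M)$ or $M$ is $P$-cyclic, working one maximal hyperideal at a time --- this is where the $\mathcal{X}_P(M)$ and $P$-cyclic machinery introduced above is put to use. The second delicate point is in the converse: with mere ``$\subseteq$'' in (2) the naive argument only yields $g(B,1^{(n-2)},M)=g(J,1^{(n-2)},M)$ and can be iterated indefinitely, so one must either invoke the equality form of (2) as above, or replace the contradiction step by a Zorn/descent argument on the chain of such $B$'s, reusing (1) to control intersections along a maximal chain; this bookkeeping is the point requiring the most care.
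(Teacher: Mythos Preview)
The paper's own proof is a single sentence: it observes that $M$ is faithful as a hypermodule over $R/S_0$ and invokes Theorem~3.11 of \cite{sorc2}, which is precisely the present statement in the faithful case (where $S_0=\{0\}$ and hence $f(I_\lambda,S_0,0^{(m-2)})=I_\lambda$). You instead attempt a direct argument over $R$. Your forward direction is sound; the cancellation principle $S_{g(I,1^{(n-2)},M)}=f(I,S_0,0^{(m-2)})$ that you isolate is exactly what the cited theorem encodes, so in effect you are reproving the reference rather than invoking it, via the same $\mathcal{X}_P(M)$/$P$-cyclic dichotomy.

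Your converse, however, has a gap you have not closed. You correctly note that condition~(2) only yields $f(B,S_0,0^{(m-2)})=J$, not $J\subseteq B$, so no contradiction arises. Neither proposed repair works. Reading (2) with equality alters the statement to be proved. And the Zorn/descent route fails: along any strictly descending chain $J\supsetneq B_0\supsetneq B_1\supsetneq\cdots$ produced by iterating (2) one has $f(B_i,S_0,0^{(m-2)})=J$ for every $i$, so condition~(1) gives only $\bigcap_i g(B_i,1^{(n-2)},M)=g\bigl(\bigcap_i f(B_i,S_0,0^{(m-2)}),1^{(n-2)},M\bigr)=g(J,1^{(n-2)},M)$ and says nothing about $g\bigl(\bigcap_i B_i,1^{(n-2)},M\bigr)$; there is no reason the intersection $\bigcap_i B_i$ lies in your family $\mathcal{A}$, and the chain can be prolonged indefinitely without contradiction. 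The remedy is exactly the paper's reduction: over $R/S_0$ the annihilator is trivial, so $f(B,S_0,0^{(m-2)})=B$ for every hyperideal $B$, and then $B\subsetneq J$ together with $J=\bigcap_{A\in\mathcal{A}}A\subseteq B$ is an immediate contradiction. To salvage your approach, first verify that the multiplication property and conditions (1)--(2) transfer between $R$ and $R/S_0$, and then run your direct argument in the faithful setting where it goes through cleanly.
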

\begin{proof}
Since $M$ over $R/S_0$ is faithful, then we are done, by Theorem 3.11 in \cite{sorc2}. 
\end{proof}
\begin{definition}
An $(m,n)$-hypermodule $(M,f,g)$ over $(R,h,k)$ is called cofinitely generated if for each nonempty family of subhypermodules $N_{i}$ of $M$, $\bigcap_{i \in I} N_{i}=0$ then there exists a finite subset $\Lambda^\prime of \Lambda$ such that $\bigcap_{i \in I^\prime} N_i=0$.
\end{definition}
\begin{theorem} \label{16}
Let $(M,f,g)$ be a faithful multiplication $(m,n)$-hypermodule over $(R,h,k)$. Then $M$ is cofinitely generated if and only if $R$ is cofinitely generated.
\end{theorem}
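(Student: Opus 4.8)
The plan is to transport the finite-cogeneration chain condition across the correspondence $I\mapsto g(I,1^{(n-2)},M)$ between hyperideals of $R$ and subhypermodules of $M$, using the multiplication hypothesis to realize every subhypermodule in this form and using part (1) of Theorem \ref{15} to commute this correspondence with intersections. First I would isolate the single consequence of faithfulness that does all the work. Assuming $M\neq 0$ (the statement is vacuous or false for $M=0$, e.g.\ $R=\mathbb Z$, $M=0$, so I would add this mild standing assumption), I claim $S_0=\{r\in R\mid g(r,1^{(n-2)},M)=\{0\}\}=\{0\}$: indeed $0\in S_0$ by axiom (iv) of a hypermodule, while if $r\in S_0$ then $0\in g(r,1^{(n-2)},m)$ for a fixed nonzero $m$, so faithfulness forces $r=0$. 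Consequently $f(I,S_0,0^{(m-2)})=f(I,0^{(m-1)})=I$ for every hyperideal $I$ (as $0$ is the scalar identity), so the identity of Theorem \ref{15}(1) collapses to
\[
\bigcap_{\lambda\in\Lambda} g(I_\lambda,1^{(n-2)},M)=g\Big(\bigcap_{\lambda\in\Lambda} I_\lambda,\,1^{(n-2)},M\Big)
\]
for every nonempty family $\{I_\lambda\}_{\lambda\in\Lambda}$ of hyperideals of $R$, in particular for finite families. I would also note the reverse direction of the same fact: $g(A,1^{(n-2)},M)=\{0\}$ for a hyperideal $A$ implies $A\subseteq S_0=\{0\}$, since then $g(r,1^{(n-2)},M)=\{0\}$ for each $r\in A$.

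For the implication ``$R$ cofinitely generated $\Rightarrow$ $M$ cofinitely generated'', take a family $\{N_\lambda\}_{\lambda\in\Lambda}$ of subhypermodules of $M$ with $\bigcap_\lambda N_\lambda=0$. Since $M$ is a multiplication hypermodule, choose for each $\lambda$ a hyperideal $I_\lambda$ with $N_\lambda=g(I_\lambda,1^{(n-2)},M)$. The displayed identity gives $g(\bigcap_\lambda I_\lambda,1^{(n-2)},M)=\bigcap_\lambda N_\lambda=0$, hence $\bigcap_\lambda I_\lambda=0$ by the faithfulness remark. Applying the hypothesis on $R$, there is a finite $\Lambda'\subseteq\Lambda$ with $\bigcap_{\lambda\in\Lambda'} I_\lambda=0$, and then the identity for this finite family yields $\bigcap_{\lambda\in\Lambda'} N_\lambda=g(\bigcap_{\lambda\in\Lambda'} I_\lambda,1^{(n-2)},M)=g(0,1^{(n-2)},M)=0$.

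For the converse ``$M$ cofinitely generated $\Rightarrow$ $R$ cofinitely generated'', start from a family $\{A_\lambda\}_{\lambda\in\Lambda}$ of hyperideals of $R$ with $\bigcap_\lambda A_\lambda=0$, and put $N_\lambda=g(A_\lambda,1^{(n-2)},M)$, which is a subhypermodule of $M$. The identity gives $\bigcap_\lambda N_\lambda=g(\bigcap_\lambda A_\lambda,1^{(n-2)},M)=g(0,1^{(n-2)},M)=0$, so the hypothesis on $M$ produces a finite $\Lambda'\subseteq\Lambda$ with $\bigcap_{\lambda\in\Lambda'} N_\lambda=0$; that is, $g(\bigcap_{\lambda\in\Lambda'} A_\lambda,1^{(n-2)},M)=0$, and the faithfulness remark forces $\bigcap_{\lambda\in\Lambda'} A_\lambda=0$, as required.

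The step I expect to be the real obstacle is the very first one: the honest reduction of Theorem \ref{15}(1) to the clean intersection formula, i.e.\ proving $S_0=\{0\}$ and $f(I,S_0,0^{(m-2)})=I$ cleanly from the definitions (and pinning down the harmless $M\neq 0$ hypothesis). Once that formula is in hand, both implications are a routine back-and-forth through the multiplication correspondence and axiom (iv), with no further hyperstructure-specific difficulties.
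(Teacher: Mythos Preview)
Your argument is correct and follows essentially the same route as the paper's proof: both directions are obtained by pushing zero intersections back and forth through the correspondence $I\mapsto g(I,1^{(n-2)},M)$, using faithfulness to deduce $A=0$ from $g(A,1^{(n-2)},M)=0$ and the intersection identity to commute $\bigcap$ with $g(-,1^{(n-2)},M)$. The only cosmetic difference is that the paper invokes the clean intersection formula directly from \cite[Theorem~3.11]{sorc2}, whereas you derive it from Theorem~\ref{15}(1) by first checking $S_0=\{0\}$; since Theorem~\ref{15} is itself proved by reduction to that same external result, the two presentations coincide.
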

\begin{proof}
$\Longrightarrow$ Let $(M,f,g)$ be a faithful multiplication $(m,n)$-hypermodule over $(R,h,k)$ and $\bigcap_{i \in I} J_i=0$ for a nonempty family of hyperideals $J_i$ of $R$. Then we have

 $\bigcap_{i \in I} g(J_i,1^{(n-2)},M)=g(\bigcap_{i \in I}J_i,1^{(n-2)},M)=g(0,1^{(n-2)},M)=0$\\by Theorem 3.11 in \cite{sorc2}. Since $M$ is cofinitely generated, then there exists a finite subset $I ^\prime$ of $I$ with

 $\bigcap_{i \in I^\prime}g(J_i,1^{(n-2)},M)=g(\bigcap_{i \in I^\prime}J_i,1^{(n-2)},M)=0$.\\
 This implies that $\bigcap_{i \in I^\prime}J_i=0$, since  $M$ is faithful. Then $(R,h,k)$ is cofinitely generated.\\
 $\Longleftarrow$ Let $(R,h,k)$ be cofinitely generated and $\bigcap_{i \in I} T_i=0$ for a nonempty family of subhypermodules $T_i$ of $M$. Since $M$ is a multiplication  $(m,n)$-hypermodule, then for each $i \in I$ there exists a hyperideal $J_i$ of $R$ such that $T_i=g(J_i,1^{(n-2)},M)$. Now we have 
 
 $g(\bigcap_{i \in I}J_i,1^{(n-2)},M) \subseteq \bigcap_{i \in I}g(J_i,1^{(n-2)},M)=\bigcap_{i \in I}T_i=0$.\\
 Since $M$ is faithful then $\bigcap_{i \in I}J_i=0$. Since $R$ is cofinitely generated, then there exists a finite subset $I^\prime$ of $I$ such that 
 $\bigcap_{i \in I^\prime}J_i=0$. Hence
 
 $\bigcap_{i \in I^\prime}T_i=\bigcap_{i \in I^\prime}g(J_i,1^{(n-2)},M)$
 
 $\hspace{1.3cm}=g(\bigcap_{i \in I^\prime}J_i,1^{(n-2)},M)$
 
 $\hspace{1.3cm}=g(0,1^{(n-2)},M)$
  
 $\hspace{1.3cm}=0$. \\Thus
 $(m,n)$-hypermodule $(M,f,g)$ is cofinitely generated. 
\end{proof}
Let $(M,f,g)$ be an $(m,n)$-hypermodule over $(R,h,k)$. We denote the family of all hyperideals $A$ of $R$ such that $g(A,1^{(n-2)},M)=M$ by $\omega$ and denote  intersection  of all members of $\omega$ by $\omega(M)$.
\begin{theorem} \label{17}
Let $(M,f,g)$ be an faithful multiplication $(m,n)$-hypermodule over $(R,h,k)$ and $\omega(M)=B$. Then:

(1) $x \in g(B,1^{(n-2)},m)$, for each $x \in M$. 

(2) $B=B.B$.

(3) $B \subseteq P$ or $h(B,P,0^{(n-2)})=R$, for each $n$-ary prime hyperideal $P$ of $R$.

(4) $M$ is a multiplication $(m,n)$-hypermodule over $B$. 

(5) $g(C,1^{(n-2)},M) \neq M$, for each proper hyperideal $C$ of $R$
\end{theorem}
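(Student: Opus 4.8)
The plan is to treat the five assertions in the order (1) $\to$ (2) $\to$ (5) $\to$ (3) $\to$ (4), since each later claim leans on the earlier ones, and to keep exploiting two features of the hypothesis: $M$ is faithful and multiplication, and $B=\omega(M)$ is the intersection of all hyperideals $A$ with $g(A,1^{(n-2)},M)=M$.

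First I would prove (1). Fix $x\in M$. Because $M$ is multiplication, $g(R,1^{(n-2)},x)=g(A,1^{(n-2)},M)$ for some hyperideal $A$ of $R$; the idea is to show that one may enlarge $A$ so that the family realizing the cyclic subhypermodules generated by single elements witnesses $g(A,1^{(n-2)},M)=M$ after summing, and then use the analogue of the Nakayama-type computation in Lemma~\ref{11} together with Theorem~\ref{15}(1) to pull $x$ inside $g(B,1^{(n-2)},x)$. Concretely, since $M=g(R,1^{(n-2)},M)$ and $M$ is multiplication, $R$ itself lies in $\omega$, and for each $x$ the hyperideal $A_x$ with $g(R,1^{(n-2)},x)=g(A_x,1^{(n-2)},M)$ satisfies $\sum_x A_x$-type conditions forcing $h(A_x,\ldots)$ to be comparable to $B$; I expect the cleanest route is: $g(B,1^{(n-2)},M)=M$ (this is the step to nail down, using that $B$ is the intersection of the $A$'s and Theorem~\ref{15}(1) to commute intersection with $g(-,1^{(n-2)},M)$), hence $x\in M=g(B,1^{(n-2)},M)$, and then by the multiplication property applied to the cyclic subhypermodule together with idempotence-style manipulation as in Lemma~\ref{11} one gets $x\in g(B,1^{(n-2)},x)$.

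Next, (2): from (1), $M=g(B,1^{(n-2)},M)$, so $B\in\omega$; but then applying $g(B,1^{(n-2)},-)$ to $M=g(B,1^{(n-2)},M)$ gives $M=g(k(B,B,1^{(n-2)}),1^{(n-2)},M)$, so $B.B\in\omega$ as well, whence $B\subseteq B.B$; the reverse inclusion $B.B\subseteq B$ is automatic since $B$ is a hyperideal. For (5), suppose $g(C,1^{(n-2)},M)=M$ for a proper hyperideal $C$; then $C\in\omega$, so $B\subseteq C$, and $C$ is contained in some maximal hyperideal $P$, giving $B\subseteq P$ with $g(B,1^{(n-2)},M)=M$ — but Lemma~\ref{11} (applied, as it must be, to the case where the relevant hyperideal sits inside $J_{(m,n)}(R)$, or more directly to a single maximal hyperideal via the invertibility argument inside its proof) forces $M=0$, contradicting faithfulness on a nonzero hypermodule; so no proper $C$ works. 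For (3), fix an $n$-ary prime hyperideal $P$; if $B\not\subseteq P$, I would show $h(B,P,0^{(m-2)})=R$: otherwise $h(B,P,0^{(m-2)})$ is a proper hyperideal containing $P$, hence contained in a maximal hyperideal $Q$, so $B\subseteq Q$; then $g(B,1^{(n-2)},M)=M$ with $B\subseteq Q$ again triggers the Lemma~\ref{11}/invertibility contradiction, so the sum must be all of $R$.

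Finally (4): to see $M$ is a multiplication $(m,n)$-hypermodule over $B$ — reading $B$ as a Krasner $(m,n)$-hyperring via (2), which makes $B$ idempotent so that $1_B$-style local identities behave — take any subhypermodule $N$ of $M$. Since $M$ is multiplication over $R$, $N=g(I,1^{(n-2)},M)$ for some hyperideal $I$ of $R$; the claim is that $g(h(I,B\text{-part})\cap B,1^{(n-2)},M)=N$, i.e. one can replace $I$ by a hyperideal of $B$ without changing the image. Using (1), $M=g(B,1^{(n-2)},M)$, so $N=g(I,1^{(n-2)},g(B,1^{(n-2)},M))=g(k(I,B,1^{(n-2)}),1^{(n-2)},M)$ and $k(I,B,1^{(n-2)})$ is a hyperideal of $B$; that exhibits $N$ in the required form, completing the proof.

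\textbf{Main obstacle.} The genuinely delicate point is step (1): establishing $g(B,1^{(n-2)},M)=M$ and then internalizing it to $x\in g(B,1^{(n-2)},x)$. It requires commuting an arbitrary intersection of hyperideals with the operator $g(-,1^{(n-2)},M)$, which is exactly what Theorem~\ref{15}(1) supplies but only after checking the $f(I_\lambda,S_0,0^{(m-2)})$ bookkeeping collapses (here $S_0=\{r: g(r,1^{(n-2)},M)=0\}=0$ by faithfulness), and then running the Lemma~\ref{11}-style invertibility computation in reverse to move from a statement about all of $M$ to a statement about the cyclic piece generated by $x$. Everything after (1) is a short deduction from (1), (2), faithfulness, and Lemma~\ref{11}.
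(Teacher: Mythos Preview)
Your plan for (1), (2), and (4) is essentially the paper's: commute the intersection defining $B$ through $g(-,1^{(n-2)},M)$ via Theorem~\ref{15}(1) (with $S_0=0$ by faithfulness) to obtain $g(B,1^{(n-2)},M)=M$; for (2) iterate to get $B.B\in\omega$; for (4) replace a hyperideal $I$ of $R$ by $k(I,B,1^{(n-2)})$. One simplification for (1): no invertibility or Nakayama-style argument is needed to pass from $g(B,1^{(n-2)},M)=M$ to $x\in g(B,1^{(n-2)},x)$. Writing $g(R,1^{(n-2)},x)=g(D,1^{(n-2)},M)$ and substituting $M=g(B,1^{(n-2)},M)$, one obtains directly $g(R,1^{(n-2)},x)=g(B,1^{(n-2)},g(D,1^{(n-2)},M))=g(B,1^{(n-2)},x)$.

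Your arguments for (3) and (5), however, contain a real gap. In both you try to extract a contradiction from $B\subseteq Q$ for a \emph{single} maximal hyperideal $Q$ by appealing to Lemma~\ref{11}. But that lemma requires the hyperideal to lie in $J_{(m,n)}(R)$; the invertibility of $h(-r,1,0^{(m-2)})$ in its proof comes from Corollary~4.15 of \cite{sorc1}, a Jacobson-radical statement, not a single-maximal one. From $B\subseteq Q$ and $g(B,1^{(n-2)},M)=M$ you only get $Q\in\omega$, which is no contradiction at all. A further warning sign is that your argument for (3) never uses that $P$ is $n$-ary prime. The paper's route for (3) is element-wise and does use primality: assuming $M\neq g(P,1^{(n-2)},M)$, pick $m\notin g(P,1^{(n-2)},M)$ and write $g(R,1^{(n-2)},m)=g(C,1^{(n-2)},M)$ with $C\nsubseteq P$; by (1) there is $b\in B$ with $0\in g(h(-b,1,0^{(m-2)}),1^{(n-2)},m)$, and faithfulness gives $k(h(-b,1,0^{(m-2)}),C,1^{(n-2)})=0\subseteq P$; primality together with $C\nsubseteq P$ forces $h(-b,1,0^{(m-2)})\subseteq P$, whence $1\in h(B,P,0^{(n-2)})$. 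For (5) the paper argues purely inside $\omega$: if $g(C,1^{(n-2)},M)=M$ then $C.B\in\omega$, so $B\subseteq C.B\subseteq C$, and together with $C\subseteq B$ this yields $C=B$; that last step indicates the intended hypothesis in (5) is that $C$ is a proper hyperideal of $B$, in keeping with (4).
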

\begin{proof}
(1) Let $B=\omega(M)=\bigcap_{A \in \omega} A$. Theorem 3.11 in \cite{sorc2} shows that $\linebreak$
$g(B,1^{(n-2)},M)=g(\bigcap_{A \in \omega} A,1^{(n-2)},M)=\bigcap_{A \in \omega}(g(A,1^{(n-2)},M)=\bigcap_{A \in \omega}M=M$. Let $x\in R$. Since $M$ is multiplication, then $g(R,1^{(n-2)},x)=g(D,1^{(n-2)},M)$ for some hyperideal $D$ of $R$. Then we get 

$x \in g(1^{(n-1)},x) \subseteq g(R,1^{(n-2)},x)$

$\hspace{2.4cm}=g(D,1^{(n-2)},M)$

$\hspace{2.4cm}=g(D,1^{(n-2)},g(B,1^{(n-2)},M))$

$\hspace{2.4cm}=g(k(D,1^{(n-2)},B),1^{(n-2)},M)$

$\hspace{2.4cm}=g(k(B,1^{(n-2)},D),1^{(n-2)},M)$

$\hspace{2.4cm}=g(B,1^{(n-2)},g(D,1^{(n-2)},M))$

$\hspace{2.4cm}=g(B,1^{(n-2)},g(R,1^{(n-2)},x))$

$\hspace{2.4cm}=g(k(B,R,1^{(n-2)}),1^{(n-2)},x)$

$\hspace{2.4cm}=g(B,1^{(n-2)},x)$\\
(2) Since $M=g(B,1^{(n-2)},M)$, then

$\hspace{2cm}M=g(B,1^{(n-2)},M)$

$\hspace{2.5cm}=g(B,1^{(n-2)},g(B,1^{(n-2)},M))$

$\hspace{2.5cm}=g(k(B^{(2)},1^{(n-2)}),1^{(n-2)},M)$

$\hspace{2.5cm}\subseteq g(B.B,1^{(n-2)},M)$\\
Since $g(B.B,1^{(n-2)},M) \subseteq M$ then $M=g(B.B,1^{(n-2)},M)$
which implies $B.B \in \omega$ then $B \subseteq B.B$. Since $B.B  \subseteq B$ then we get $B=B.B$. \\
(3) If $M=g(P,1^{(n-2)},M)$ for some $n$-ary prime hyperideal $P$ of $R$ then $P \in \omega$ and so $B \subseteq P$. If $M \neq g(P,1^{(n-2)},M)$ then there exists some $m \in M$ such that $m \notin g(P,1^{(n-2)},M)$. Since $M$ is a  multiplication  $(m,n)$-hypermodule,  then we have $g(R,1^{(n-2)},m)=g(C, 1^{(n-2)},M)$ for some hyperideal $C$ of $R$. If $C \subseteq P$ then $g(C,1^{(n-2)},M) \subseteq g(P,1^{(n-2)},M)$ which implies $m \in g(P,1^{(n-2)},M)$ which is a contradiction. Therefore $C \nsubseteq P$. Since for each $m \in M$, $m \in g(B,1^{(n-2)},m)$. Then there exists $b \in B$ such that $m \in f(g(b,1^{(n-2)},m),0^{(n-1)})$. Thus

$ \hspace{1cm}0 \in f(-g(b,1^{(n-2)},m),m,0^{(m-2)})$

 $\hspace{1.3cm} \subseteq f(g(-b,1^{(n-2)},m),g(1^{(n-1)},m),g(0,1^{(n-2)},m))$

$\hspace{1.3cm}=g(h(-b,1,0^{(m-2)}),1^{(n-2)},m)$

$\hspace{1.3cm}\subseteq g(h(-b,1,0^{(m-2)}),1^{(n-2)},g(1^{(n-1)},m))$

$\hspace{1.3cm}\subseteq g(h(-b,1,0^{(m-2)}),1^{(n-2)},g(R,1^{(n-2)},m))$

$\hspace{1.3cm}= g(h(-b,1,0^{(m-2)}),1^{(n-2)},g(C,1^{(n-2)},M))$

$\hspace{1.3cm}= g(k(h(-b,1,0^{(m-2)}),C,1^{(n-2)}),1^{(n-2)},M)$\\
Since $M$ is faithful then
$ k(h(-b,1,0^{(m-2)}),C,1^{(n-2)})=0$ then Since $P$ is an $n$-ary hyperideal of $R$ and $ k(h(-b,1,0^{(m-2)}),C,1^{(n-2)})=\{0\} \subseteq P$ with $C \nsubseteq P$ then we conclude that $h(-b,1,0^{(m-2)}) \subseteq P$. Thus $h(b,h(-b,1,0^{(m-2)}),0^{(n-2)}) \subseteq f(B,P,0^{(n-2)})$ which means $h(b,-b,1,0^{(n-3)}) \subseteq h(B,P,0^{(n-2)})$. \\Since $0 \in h(b,-b,0^{(n-2)})$ then

  $\hspace{0.7cm}1=h(0^{(n-1)},1)$
  
  $\hspace{1cm}=h(0,1,0^{(n-2)})$
  
  $ \hspace{1cm} \subseteq h(h(b,-b,1,0^{(n-3)}),1,0^{(n-2)})$
  
  $ \hspace{1cm} =h(b,-b,1,0^{(n-3)})$

  $  \hspace{1cm} \subseteq h(B,P,0^{(n-2)}) $.\\
  Consequently, $R= h(B,P,0^{(n-2)}) $.\\
  (4) Let $N$ be an subhypermodule of $(m,n)$-hypermodule $M$ over $B$. Since $N=g(B,1^{(n-2)},N)$, by (1), then $N$ is an subhypermodule of $(m,n)$-hypermodule $M$ over $R$. Thus $N=g(D,1^{(n-2)},M)$ for some hyperideal $D$ of $R$. It follows that 
  
  $\hspace{0.9cm}N=g(D,1^{(n-2)},M)$
  
  $\hspace{1.3cm}=g(B,D,1^{(n-2)},M)$
  
  $\hspace{1.3cm}=g(k(B,D,1^{(n-2)}),1^{(n-2)},M)$

$\hspace{1.3cm}\subseteq g(B.D,1^{(n-2)},M)$\\
Since $g(B.D,1^{(n-2)},M) \subseteq N$ then $g(B.D,1^{(n-2)},M) = N$. Since $B.D$ is a hyperideal of $B$, then $M$ is a multiplication $(m,n)$-hypermodule over $B$.\\
(5) Let $g(C,1^{(n-2)},M) = M$, for some proper hyperideal $C$ of $R$. Therefore we get 

$\hspace{1cm} M=g(C,1^{(n-2)},M)$

$\hspace{1.5cm}=g(C,1^{(n-2)},g(B,1^{(n-2)},M))$

$\hspace{1.5cm}=g(k(C,B,1^{(n-2)}),1^{(n-2)},M)$

$\hspace{1.5cm} \subseteq g(C.B,1^{(n-2)},M)$.\\
It follows that $C.B \in \omega$. Since $b=\omega(M)$ then $B \subseteq C.B \subseteq C$.  since $C \subseteq B$ then we get $C=B$. 
\end{proof}
\begin{theorem} \label{18}
Let $(M,f,g)$ be a nonzero $(m,n)$- hypermodule over $(R,h,k)$. Let $A_1=\{ P \in Max(R) \ \vert \ M \neq g(P,1^{(n-2)},M)\}$ and $A_2=\{ P \in Max(R) \ \vert \ S_0 \subseteq P \}$ such that $Max(R)$ is the family of all maximal subhypermodules of $R$. Then
\[J_{(m,n)}(M)=g(\bigcap\{P \ \vert \ P \in A_1\},1^{(n-2)},M)=g(\bigcap\{P \ \vert \ P \in A_2\},1^{(n-2)},M)\]
\end{theorem}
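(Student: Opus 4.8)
The plan is to identify $J_{(m,n)}(M)$ with the intersection $\bigcap_{P\in A_1}g(P,1^{(n-2)},M)$ of the maximal subhypermodules of $M$, and then to move $g(-,1^{(n-2)},M)$ past the two intersections of hyperideals by means of Theorem~\ref{15}(1). Throughout I use that $M$ is a multiplication $(m,n)$-hypermodule, that $S_0=\{r\in R\ \vert\ g(r,1^{(n-2)},M)=\{0\}\}$ is its annihilator, and that $J_{(m,n)}(M)$ denotes the intersection of all maximal subhypermodules of $M$.

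\emph{Step 1 (the two index sets are comparable).} First I would show $A_1\subseteq A_2$, i.e.\ that if $P$ is a maximal hyperideal with $S_0\nsubseteq P$ then $g(P,1^{(n-2)},M)=M$. Pick $r\in S_0\setminus P$; by maximality of $P$ there are $p\in P$ and $s\in R$ with $1\in h(p,k(s,r,1^{(n-2)}),0^{(m-2)})$. Applying $g(-,1^{(n-2)},M)$, distributing $g$ over $h$, using the mixed associativity $g(k(s,r,1^{(n-2)}),1^{(n-2)},M)=g(s,1^{(n-2)},g(r,1^{(n-2)},M))$ together with $g(r,1^{(n-2)},M)=\{0\}$ and $g(0,1^{(n-2)},M)=\{0\}$, everything collapses to $M\subseteq g(p,1^{(n-2)},M)\subseteq g(P,1^{(n-2)},M)$; this is the computation in the proof of Theorem~\ref{17}(3) read in reverse. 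Hence $P\notin A_1$. In particular $S_0\subseteq P$ for every $P\in A_1$.

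\emph{Step 2 (maximal subhypermodules).} Next I would prove that the maximal subhypermodules of $M$ are exactly the sets $g(P,1^{(n-2)},M)$ with $P\in A_1$. For one inclusion, fix $P\in A_1$ and set $N=g(P,1^{(n-2)},M)\neq M$; if $N\subseteq K\subseteq M$ with $K$ a subhypermodule, then $K=g(I,1^{(n-2)},M)$ for some hyperideal $I$ (as $M$ is multiplication), and distributivity together with $N\subseteq K$ gives $K=g(h(I,P,0^{(m-2)}),1^{(n-2)},M)$; since $P$ is maximal the hyperideal $h(I,P,0^{(m-2)})$ is $P$ or $R$, the first forcing $K\subseteq N$, the second giving $K=g(R,1^{(n-2)},M)=M$, so $N$ is maximal. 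For the converse, if $K$ is maximal then $K=g(S_K,1^{(n-2)},M)$ with $S_K\neq R$; the quotient $M/K$ is again a nonzero multiplication hypermodule, with annihilator $S_K$ (cf.\ the computation in the proof of Theorem~\ref{13}), hence faithful over $R/S_K$, and Lemma~\ref{11} applied over $R/S_K$ (combined with Theorem~\ref{15}(1) to see that not every maximal hyperideal of $R/S_K$ acts as the whole module) yields a maximal hyperideal $P\supseteq S_K$ of $R$ with $g(P,1^{(n-2)},M/K)\neq M/K$; since $S_K\subseteq P$ gives $K=g(S_K,1^{(n-2)},M)\subseteq g(P,1^{(n-2)},M)$, this reads $g(P,1^{(n-2)},M)\neq M$, i.e.\ $P\in A_1$, and $K\subseteq g(P,1^{(n-2)},M)\subsetneq M$ with $K$ maximal forces equality. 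Therefore $J_{(m,n)}(M)=\bigcap_{P\in A_1}g(P,1^{(n-2)},M)$.

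\emph{Step 3 (conclusion).} By Theorem~\ref{15}(1),
\[
\bigcap_{P\in A_1}g(P,1^{(n-2)},M)=g\big(\textstyle\bigcap_{P\in A_1}f(P,S_0,0^{(m-2)}),\ 1^{(n-2)},\ M\big),
\]
and since $S_0\subseteq P$ for all $P\in A_1$ by Step~1, each $f(P,S_0,0^{(m-2)})$ equals $P$, so this equals $g(\bigcap_{P\in A_1}P,1^{(n-2)},M)$; likewise $\bigcap_{P\in A_2}g(P,1^{(n-2)},M)=g(\bigcap_{P\in A_2}P,1^{(n-2)},M)$. On the other hand, for $P\in A_2\setminus A_1$ one has $g(P,1^{(n-2)},M)=M$ by the definition of $A_1$, so such factors leave the intersection unchanged and $\bigcap_{P\in A_2}g(P,1^{(n-2)},M)=\bigcap_{P\in A_1}g(P,1^{(n-2)},M)$. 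Chaining these identities gives $g(\bigcap_{P\in A_2}P,1^{(n-2)},M)=g(\bigcap_{P\in A_1}P,1^{(n-2)},M)=J_{(m,n)}(M)$, which is the assertion. The main obstacle is Step~2: showing that every maximal subhypermodule is an extended hyperideal $g(P,1^{(n-2)},M)$ requires the passage to $R/S_K$ and careful bookkeeping with distributivity and mixed associativity of the hyperoperations; once that is in place, the two applications of Theorem~\ref{15}(1) and the $A_2\setminus A_1$ argument are routine.
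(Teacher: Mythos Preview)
Your proof is correct and follows essentially the same route as the paper: both identify the maximal subhypermodules of $M$ with the sets $g(P,1^{(n-2)},M)$ for $P\in A_1$, commute the intersection past $g(-,1^{(n-2)},M)$ via the intersection formula, and then observe that primes in $A_2\setminus A_1$ contribute $M$ and therefore do not change the intersection. The only difference is one of packaging: the paper outsources your Step~2 entirely to Theorem~3.16 of \cite{sorc2} (the characterization of maximal subhypermodules of a multiplication hypermodule) and invokes Theorem~3.11(i) of \cite{sorc2} directly rather than Theorem~\ref{15}(1), whereas you reprove these facts in situ via Lemma~\ref{11} and the passage to $R/S_K$; your explicit verification in Step~1 that $A_1\subseteq A_2$ is also used but left unstated in the paper.
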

\begin{proof}
 Suppose that $B_1=\bigcap\{P \ \vert \ P \in A_1\}$ and $B_2=\bigcap\{P \ \vert \ P \in A_2\}$. Let $K$ is a maximal subhypermodule of $M$. Theorem 3.16 in \cite{sorc2} shows that  $K=g(P,1^{(n-2)},M) \neq M$, for some maximal hyperideal $P$ of $R$. By Theorem 3.11 (i) in  \cite{sorc2}, 
 
 $\hspace{0.5cm} g(\bigcap\{P \ \vert \ P \in A_2\},1^{(n-2)},M)=\bigcap \{g(P,1^{(n-2)},M) \ \vert \ P \in A_2\}$

 $\hspace{5cm} \subseteq \bigcap \{g(P,1^{(n-2)},M) \ \vert \ P \in A_1\}$

 $\hspace{5cm}  =g(\bigcap\{P \ \vert \ P \in A_1\},1^{(n-2)},M)$

 $\hspace{5cm} =J_{(m,n)}(M)$\\
 Now, let $Q \in A_2$. If $g(Q,1^{(n-2)},M)=M$ then we get $J_{(m,n)}(M) \subseteq g(Q,1^{(n-2)},M)$ which means $J_{(m,n)}(M) \subseteq g(\bigcap\{P \ \vert \ P \in A_2\},1^{(n-2)},M)$. If $g(Q,1^{(n-2)},M) \neq M$ then we conclude that $Q \in A_1$ which implies $J_{(m,n)}(M) \subseteq g(Q,1^{(n-2)},M)$ which means $J_{(m,n)}(M) \subseteq g(\bigcap\{P \ \vert \ P \in A_2\},1^{(n-2)},M)$. 
\end{proof}
\begin{definition} \cite{18}
Let $M$ is an  $(m,n)$-hypermodule over $(R,h,k)$. Let $M_1^t$
be subhypermodules of $M$
and $t = l(m-1)+1$, then $M$ is called a (internal) direct sum $M_1\oplus...\oplus M_t$ if satisfies the following
axioms:\\
(1) $M = f_{(l)}(M_1,...,M_t)$ and\\
(2) $M_i \cap f_{(l)}(M_1,...,M_{i-1},0,M_{i+1},...M_t) = \{0\}$
\end{definition}
\begin{theorem} \label{19}
Let $M=\bigoplus_{\lambda \in \Lambda}M_{\lambda}$ for some $(m,n)$-hypermodules $M_{\lambda}$ over $(R,h,k)$ and $\lambda=l(m-1)+1$. Then $M$ is a multiplication $(m,n)$-hypermodule over $(R,h,k)$ if and only if \\
$(i)$ $M_{\lambda}$ is a multiplication $(m,n)$-hypermodule for each $\lambda \in \Lambda$\\
$(ii)$ for each $\lambda \in \Lambda$ there exists hyperideal $I_{\lambda}$  of $R$ such that $g(I_\lambda,1^{(n-2)},M_\lambda)=M_\lambda$
and $g(I_\lambda,1^{(n-2)},\hat{M_\lambda})=0$ where $\hat{M_\lambda}=\bigoplus_{\mu \neq \lambda} M_{\mu}$.
\end{theorem}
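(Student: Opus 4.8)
The plan is to transport the decomposition $M=f_{(l)}(M_1,\dots,M_t)$ (write $t=l(m-1)+1$ for the number of summands) through the module axioms, using three routine facts: $g$ distributes across the iterated hyperoperations $f_{(l)}$ and $h_{(l)}$; a $k$-product folds through $g$, i.e.\ $g(k(a,b,1^{(n-2)}),1^{(n-2)},x)=g(a,1^{(n-2)},g(b,1^{(n-2)},x))$ (as already used in the proof of Lemma~\ref{11}); and $0$ is the scalar identity of $(M,f)$, so $f_{(l)}(0^{(j-1)},X,0^{(t-j)})=X$ and $0\in g(I,1^{(n-2)},M_\nu)$ for any hyperideal $I$ and any $\nu$. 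I will also use that directness of the sum (condition (2) of the definition of direct sum) forces $M_\mu\cap M_\lambda=\{0\}$ for $\mu\neq\lambda$, together with the identity $g(I,1^{(n-2)},f_{(l)}(M_1,\dots,M_t))=f_{(l)}(g(I,1^{(n-2)},M_1),\dots,g(I,1^{(n-2)},M_t))$.

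For $(\Rightarrow)$ I would assume $M$ is multiplication and fix $\lambda$. Given a subhypermodule $N$ of $M_\lambda$, view it as a subhypermodule of $M$ and write $N=g(I,1^{(n-2)},M)=f_{(l)}(g(I,1^{(n-2)},M_1),\dots,g(I,1^{(n-2)},M_t))$ for a hyperideal $I$ of $R$. For $\mu\neq\lambda$ and $a\in g(I,1^{(n-2)},M_\mu)$, the element $a=f_{(l)}(0^{(\mu-1)},a,0^{(t-\mu)})$ lies in $N\subseteq M_\lambda$ and also in $M_\mu$, hence $a\in M_\mu\cap M_\lambda=\{0\}$; so $g(I,1^{(n-2)},M_\mu)=\{0\}$ for every $\mu\neq\lambda$, and therefore $N=g(I,1^{(n-2)},M_\lambda)$. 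This gives $(i)$, and applying it to $N=M_\lambda$ and naming the resulting hyperideal $I_\lambda$ yields $M_\lambda=g(I_\lambda,1^{(n-2)},M_\lambda)$ and $g(I_\lambda,1^{(n-2)},\hat{M_\lambda})=\{0\}$, which is $(ii)$.

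For $(\Leftarrow)$ I would assume $(i)$ and $(ii)$; then $(ii)$ together with $0$-neutrality gives $g(I_\lambda,1^{(n-2)},M)=M_\lambda$. Let $N\leq M$ and put $N_\lambda:=g(I_\lambda,1^{(n-2)},N)$, so $N_\lambda\subseteq N\cap M_\lambda$, hence $N_\lambda\leq M_\lambda$, and by $(i)$, $N_\lambda=g(B_\lambda,1^{(n-2)},M_\lambda)$ for a hyperideal $B_\lambda$ of $R$. The core step is $N=f_{(l)}(N_1,\dots,N_t)$, the inclusion $\supseteq$ being clear. For $\subseteq$, take $x\in N$ with $x\in f_{(l)}(x_1,\dots,x_t)$, $x_\mu\in M_\mu$. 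Since $g(I_\lambda,1^{(n-2)},M_\mu)=\{0\}$ for $\mu\neq\lambda$, applying $g(I_\lambda,1^{(n-2)},-)$ to $x\in f_{(l)}(x_1,\dots,x_t)$ gives $g(I_\lambda,1^{(n-2)},x)\subseteq g(I_\lambda,1^{(n-2)},x_\lambda)$, and the reversibility property of the canonical $m$-ary hypergroup (which passes to $f_{(l)}$ and lets one also place $x_\lambda$ inside $f_{(l)}$ applied to $x$ and the inverses of the $x_\mu$, $\mu\neq\lambda$) gives the opposite inclusion, so $g(I_\lambda,1^{(n-2)},x)=g(I_\lambda,1^{(n-2)},x_\lambda)$. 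Because $M_\lambda$ is multiplication, $g(R,1^{(n-2)},x_\lambda)=g(J_\lambda,1^{(n-2)},M_\lambda)$ for some hyperideal $J_\lambda$; substituting this, folding the $k$-product, using commutativity of $k$ and then $(ii)$, one computes $g(I_\lambda,1^{(n-2)},g(R,1^{(n-2)},x_\lambda))=g(J_\lambda,1^{(n-2)},M_\lambda)=g(R,1^{(n-2)},x_\lambda)$, while this same expression equals $g(I_\lambda,1^{(n-2)},x_\lambda)$ since $k(I_\lambda,R,1^{(n-2)})=I_\lambda$. Hence
\[
g(I_\lambda,1^{(n-2)},x_\lambda)=g\bigl(I_\lambda,1^{(n-2)},g(R,1^{(n-2)},x_\lambda)\bigr)=g(R,1^{(n-2)},x_\lambda)\ni x_\lambda,
\]
so $x_\lambda\in g(I_\lambda,1^{(n-2)},x)\subseteq g(I_\lambda,1^{(n-2)},N)=N_\lambda$ and thus $x\in f_{(l)}(x_1,\dots,x_t)\subseteq f_{(l)}(N_1,\dots,N_t)$. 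Finally $N=f_{(l)}(g(B_1,1^{(n-2)},g(I_1,1^{(n-2)},M)),\dots,g(B_t,1^{(n-2)},g(I_t,1^{(n-2)},M)))=g\bigl(\sum_\lambda k(B_\lambda,I_\lambda,1^{(n-2)}),1^{(n-2)},M\bigr)$, and $\sum_\lambda k(B_\lambda,I_\lambda,1^{(n-2)})$ is a hyperideal of $R$, so $M$ is multiplication.

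The bookkeeping — iterating distributivity to move $g$ across $f_{(l)}$ and $h_{(l)}$, folding $k$-products through $g$, and the $0$-neutrality reductions — is routine. The step I expect to be the main obstacle is proving $x_\lambda\in N_\lambda$ in $(\Leftarrow)$: it needs both the reversibility identity $g(I_\lambda,1^{(n-2)},x)=g(I_\lambda,1^{(n-2)},x_\lambda)$, which upgrades the statement that $g(I_\lambda,1^{(n-2)},-)$ is the projection onto $M_\lambda$ from the level of $M$ to the level of individual elements, and the identity $g(I_\lambda,1^{(n-2)},x_\lambda)=g(R,1^{(n-2)},x_\lambda)$, which is exactly where hypothesis $(i)$ (cyclic subhypermodules of $M_\lambda$ have the form $g(J_\lambda,1^{(n-2)},M_\lambda)$) and hypothesis $(ii)$ ($g(I_\lambda,1^{(n-2)},M_\lambda)=M_\lambda$) must be combined. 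Care is also needed that reversibility of $(M,f)$ transfers to the iterate $f_{(l)}$ and that $\Lambda$ being finite is what makes the representation $x\in f_{(l)}(x_1,\dots,x_t)$ (and hence the final recombination) available.
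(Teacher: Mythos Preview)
Your argument is correct and takes a genuinely different route from the paper. For $(\Leftarrow)$, the paper does not construct a presenting hyperideal at all; it invokes the local criterion (Theorem~3.8 in \cite{sorc2}): for each maximal hyperideal $P$ either $\mathcal{X}_P(M_\lambda)=M_\lambda$ for every $\lambda$ (whence $\mathcal{X}_P(M)=M$), or some $M_\gamma$ is $n$-ary $P$-cyclic, and then $I_\gamma\not\subseteq P$ is used to lift a $P$-cyclic presentation from $M_\gamma$ to $M$. Your approach is direct and constructive: you show $N=f_{(l)}(N_\lambda^{(\lambda\in\Lambda)})$ with $N_\lambda=g(B_\lambda,1^{(n-2)},M_\lambda)$ and then exhibit the explicit hyperideal $h_{(l)}\bigl(k(B_\lambda,I_\lambda,1^{(n-2)})^{(\lambda\in\Lambda)}\bigr)$ presenting $N$. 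This buys an explicit formula and avoids the external characterization entirely; the price is the element-level work you flagged, namely the equality $g(I_\lambda,1^{(n-2)},x)=g(I_\lambda,1^{(n-2)},x_\lambda)$ via reversibility of $f_{(l)}$ and the identity $g(I_\lambda,1^{(n-2)},x_\lambda)=g(R,1^{(n-2)},x_\lambda)$, both of which you handle correctly. For $(\Rightarrow)$, the paper only spells out (ii), implicitly taking (i) as known for direct summands of multiplication hypermodules; your argument gets (i) and (ii) simultaneously from the single hyperideal presenting $N\leq M_\lambda$ inside $M$, which is tidier.
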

\begin{proof}
$\Longrightarrow$Let $M$ is a multiplication $(m,n)$-hypermodule over $(R,h,k)$ and $\lambda \in \Lambda$ such that $\lambda=l(m-1)+1$. Then there exists hyperideal $I_{\lambda}$ of $R$ such that $g(I_{\lambda},1^{(n-2)},M)=M_{\lambda}$ which implies $g(I_{\lambda},1^{(n-2)},\hat{M_{\lambda}}) \cong g(I_{\lambda},1^{(n-2)}, M/M_{\lambda})=M_{\lambda}/M_{\lambda}=0$. Therefore we have $M_{\lambda}=g(I_{\lambda},1^{(n-2)},M)=g(I_{\lambda},1^{(n-2)},M_{\lambda} \bigoplus \hat{M_{\lambda}})=g(I_{\lambda},1^{(n-2)},M_{\lambda})$.\\ $\Longleftarrow$ Let $P$ be a maximal hyperideal of $R$ such that $\mathcal{X}_P(M_{\lambda})=M_{\lambda}$ for all $\lambda \in \Lambda$. Then we get $M=\bigoplus_{\lambda \in \Lambda}M_{\lambda}=\bigoplus_{\lambda \in \Lambda} \mathcal{X}_P(M_{\lambda})=\mathcal{X}_P (\bigoplus_{\lambda \in \Lambda}M_{\lambda})=\mathcal{X}_P(M).$ If $\mathcal{X}_P(M_{\gamma}) \neq M_{\gamma}$, for some $\gamma \in \Lambda$ then we conclude that $M_\gamma$ is $n$-ary $P-$cyclic, by Theorem 3.8 in \cite{sorc2}. It means there exist $m \in M_{\gamma}$ and $p \in P$ with

 $g(h(1,-p,0^{(m-2)}),1^{(n-2)},M_{\gamma}) \subseteq g(R,1^{(n-2)},m)$. \\Thus we have
 
  $\hspace{1cm} g(I_{\gamma},1^{(n-2)},M)=g(I_{\gamma},1^{(n-2)},M_{\gamma} \bigoplus \hat {M_{\gamma}})$
  
  $\hspace{3.5cm}=g(I_{\gamma},1^{(n-2)},M_{\gamma}) \bigoplus g((I_{\gamma},1^{(n-2)},\hat{M_{\gamma}})$
  
  $\hspace{3.5cm}=g(I_{\gamma},1^{(n-2)},M_{\gamma})$

$\hspace{3.5cm}=M_{\gamma}$. \\It follows that 

$g(h(1,-p,0^{(m-2)}),I_{\gamma},1^{(n-3)},M) =g(h(1,-p,0^{(m-2)}),1^{(n-2)},M_{\gamma})$

 $\hspace{5.1cm}\subseteq g(R,1^{(n-2)},m).$\\
Now let $I_{\gamma} \subseteq P$. It follows that $M_{\gamma}=g(I_{\gamma},1^{(n-2)},M_{\gamma}) \subseteq g(P,1^{(n-2)},M_{\gamma})$ which means $g(P,1^{(n-2)},M_{\gamma})=M_{\gamma}$. By Theorem 3.8 in \cite{sorc2}, we have $\mathcal{X}_(M_{\gamma})=M_{\gamma}$, since $M_{\gamma}$ is a multiplication $(m,n)$-hypermodule. This is a contradiction. Thus $I_{\gamma} \nsubseteq P$ and so $g(h(1,-p,0^{(m-2)}),1^{(n-2)},I_{\gamma}) \nsubseteq P$.Hence  $h(1,-q,0^{(m-2)}) \subseteq k(h(1,-p,0^{(m-2)}),I_{\gamma},1^{(n-2)})$
for some $q \in P$. Then 

$g(h(1,-q,0^{(m-2)}),1^{(n-2)},M) \subseteq g(k(h(1,-p,0^{(m-2)}),I_{\gamma},1^{(n-2)}),1^{(n-2)},M)$

$\hspace{4.6cm}=g(h(1,-p,0^{(m-2)}),I_{\gamma},1^{(n-3)},M)$ 

$\hspace{4.6cm}\subseteq g(R,1^{(n-2)},m)$. \\It means $M$ is $n$-ary $P$-cyclic. Consequently, $M$ is a multiplication $(m,n)$-hypermodule, by Theorem 3.8
in \cite{sorc2}. 
\end{proof}
\section{The primary subhypermodules
of multiplication $(m,n)$-hyperodules }

\begin{lem} \label{80}
Let $(M,f,g)$ be a faithful multiplication $(m,n)$-hypermodule over $(R,h,k)$ such that $Q$ is an $n$-ary primary hyperideal of $R$. If $g(r,1^{(n-2)},m) \subseteq g(Q,1^{((n-2)},M)$, for some $r \in R$ , $m \in M$ then $r \in {\sqrt{Q}}^{(m,n)}$ or $m \in g(Q,1^{((n-2)},M)$. 
\end{lem}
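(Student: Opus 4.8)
The plan is to prove the dichotomy by assuming $m\notin g(Q,1^{(n-2)},M)$ and deducing $r\in\sqrt{Q}^{(m,n)}$ (if $g(Q,1^{(n-2)},M)=M$ there is nothing to prove). Since $(M,f,g)$ is a multiplication hypermodule, the cyclic subhypermodule $g(R,1^{(n-2)},m)$ equals $g(A,1^{(n-2)},M)$ for some hyperideal $A$ of $R$. From $m\in g(1^{(n-1)},m)\subseteq g(R,1^{(n-2)},m)=g(A,1^{(n-2)},M)$ together with $m\notin g(Q,1^{(n-2)},M)$ one sees that $A\subseteq Q$ is impossible (it would force $g(A,1^{(n-2)},M)\subseteq g(Q,1^{(n-2)},M)$), so $A\nsubseteq Q$; fix $a\in A\setminus Q$.

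Next I would transfer the hypothesis $g(r,1^{(n-2)},m)\subseteq g(Q,1^{(n-2)},M)$ from the element $m$ to the whole cyclic subhypermodule $g(R,1^{(n-2)},m)$. Using commutativity of $k$, the mixed-associativity identity $g(s,1^{(n-2)},g(I,1^{(n-2)},M))=g(k(s,I,1^{(n-2)}),1^{(n-2)},M)$ used throughout the paper (for instance in Lemma \ref{11} and Theorem \ref{17}), and the fact that the subhypermodule $g(Q,1^{(n-2)},M)$ absorbs $g(R,1^{(n-2)},-)$, one computes
\begin{align*}
g(k(r,A,1^{(n-2)}),1^{(n-2)},M)&=g(r,1^{(n-2)},g(A,1^{(n-2)},M))\\
&=g(R,1^{(n-2)},g(r,1^{(n-2)},m))\\
&\subseteq g(Q,1^{(n-2)},M).
\end{align*}
Since $a\in A$ gives $k(r,a,1^{(n-2)})\subseteq k(r,A,1^{(n-2)})$, it follows that $g(k(r,a,1^{(n-2)}),1^{(n-2)},M)\subseteq g(Q,1^{(n-2)},M)$, i.e.\ $k(r,a,1^{(n-2)})\subseteq S_{g(Q,1^{(n-2)},M)}$.

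It remains to upgrade $k(r,a,1^{(n-2)})\subseteq S_{g(Q,1^{(n-2)},M)}$ to $k(r,a,1^{(n-2)})\subseteq Q$ and then apply primariness. This is where faithfulness is used in an essential way: for a faithful multiplication hypermodule one has the cancellation property $g(I,1^{(n-2)},M)\subseteq g(J,1^{(n-2)},M)\Longrightarrow I\subseteq J$ (the hyperring analogue of the classical identity $(IM:M)=I$, which follows from the intersection formula of Theorem 3.11 in \cite{sorc2} together with the implication $g(I,1^{(n-2)},M)=0\Rightarrow I=0$ invoked in Theorem \ref{16}); applied with $J=Q$ this gives $S_{g(Q,1^{(n-2)},M)}=Q$, hence $k(r,a,1^{(n-2)})\subseteq Q$. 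Finally, $a\notin Q$ and $Q$ is a proper hyperideal, so $1\notin Q$, and the defining property of an $n$-ary primary hyperideal of $R$ from \cite{sorc1} forces $r\in\sqrt{Q}^{(m,n)}$, which finishes the proof. I expect the identification $S_{g(Q,1^{(n-2)},M)}=Q$ to be the main obstacle: it is the only point at which faithfulness is indispensable, and some care is needed to run the cancellation argument only for honest hyperideals of $R$ rather than for the arbitrary subsets of $R$ produced by intermediate hyperproducts.
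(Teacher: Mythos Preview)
Your overall strategy is natural, but the step you yourself flag as ``the main obstacle'' is a genuine gap, not just a detail to be filled in. The cancellation property $g(I,1^{(n-2)},M)\subseteq g(J,1^{(n-2)},M)\Rightarrow I\subseteq J$ does \emph{not} follow from faithfulness together with the intersection formula of Theorem~3.11 in \cite{sorc2}. From $g(I,1^{(n-2)},M)\subseteq g(J,1^{(n-2)},M)$ the intersection formula only yields $g(I,1^{(n-2)},M)=g(I\cap J,1^{(n-2)},M)$, and deducing $I=I\cap J$ from this is exactly the same cancellation you are trying to establish; the implication $g(K,1^{(n-2)},M)=0\Rightarrow K=0$ does not help, since there is no reason for $g(I,1^{(n-2)},M)/g(I\cap J,1^{(n-2)},M)$ to be of the form $g(K,1^{(n-2)},M)$. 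In fact Theorem~\ref{17} of the present paper exhibits the obstruction explicitly: with $B=\omega(M)$ one has $g(B,1^{(n-2)},M)=M$, so whenever $B\neq R$ (which is not excluded by faithfulness) one gets $S_{g(B,1^{(n-2)},M)}=R\neq B$. Your argument would go through if $M$ were additionally assumed finitely generated (equivalently $\omega(M)=R$), but not in the stated generality. Note also that falling back on ``$S_{QM}$ is primary with radical $\sqrt{Q}^{(m,n)}$'' would be circular, since that is essentially the content of the corollary that \emph{uses} this lemma.

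The paper's proof takes a completely different route that sidesteps global cancellation. Assuming $r\notin\sqrt{Q}^{(m,n)}$, it sets $E=\{a\in R: g(a,1^{(n-2)},m)\subseteq g(Q,1^{(n-2)},M)\}$ and, supposing $E\neq R$, chooses a maximal hyperideal $P\supseteq E$. One checks $m\notin\mathcal{X}_P(M)$, so by Theorem~3.8 of \cite{sorc2} the module $M$ is $n$-ary $P$-cyclic: there exist $x\in M$ and $p\in P$ with $g(h(1,-p,0^{(m-2)}),1^{(n-2)},M)\subseteq g(R,1^{(n-2)},x)$. Working inside the cyclic piece $g(R,1^{(n-2)},x)$ one can now use faithfulness \emph{elementwise} (cancelling $x$, not an ideal) to push $k(r,u,1^{(n-2)})$ into $Q$ and then invoke primariness of $Q$ directly, producing $h(1,-p,0^{(m-2)})\subseteq E\subseteq P$, a contradiction. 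The point is that passing to a single generator over a maximal hyperideal replaces the false ideal-level cancellation by a valid element-level one.
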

\begin{proof}
Let $r \notin {\sqrt{Q}}^{(m,n)}$. Suppose that $E=\{a \in R \ \vert \ g(a,1^{(n-2)},m) \subseteq g(Q,1^{((n-2)},M)\}$. Let $E \neq R$. Therefore there exists a maximal hyperideal $P$ of $R$ such that $E \subseteq P$. If $m \in \mathcal{X}_P(M)$, then for some $p \in P$, $g(h(1,-p,0^{(m-2)}),1,m)=\{0\}$. This means that $g(h(1,-p,0^{(m-2)}),1,m) \subseteq g(Q,1^{((n-2)},M)$. Hence  $h(1,-p,0^{(m-2)}) \subseteq E \subseteq Q$ and so $1 \in Q$, which is a contradiction. Thus $m \notin \mathcal{X}_P(M)$. Then $M$ is n-ary $P$-cyclic, by Theorem 3.8 in \cite{sorc2}. Then there exists $x \in M$, $p \in P$ such
that 

$g(h(1,-p,0^{(m-2)}),1^{(n-2)},M) \subseteq g(R,1^{(n-2)},x)$.\\ This means 
$g(h(1,-p,0^{(m-2)}),1^{(n-2)},m) = g(U,1^{(n-2)},x)$ for some $U \subseteq R$. We have

$g(r,h(1,-p,0^{(m-2)}),1^{(n-3)},m)=g(h(1,-p,0^{(m-2)}),1^{(n-2)},g(r,1^{(n-2)},m))$

$\hspace*{4.8cm}\subseteq g(h(1,-p,0^{(m-2)}),1^{(n-2)},g(Q,1^{(n-2)},M))$

$\hspace*{4.8cm}\subseteq g(Q,1^{(n-2)},g(h(1,-p,0^{(m-2)}),1^{(n-2)},M))$

$\hspace*{4.8cm}\subseteq g(Q,1^{(n-2)},g(R,1^{(n-2)},x))$

$\hspace*{4.8cm}=g(Q,1^{(n-2)},x)$\\
Thus for some $Q_1 \subseteq Q$, we get

$g(r,h(1,-p,0^{(m-2)}),1^{(n-3)},m)=g(Q_1,1^{(n-2)},x)$\\
Also,

$g(k(r,U,1^{(n-2)}),1^{(n-2)},x)=g(r,1^{(n-2)},g(U,1^{(n-2)},x))$

$\hspace*{4.1cm}=g(r,1^{(n-2)},g(h(1,-p,0^{(m-2)}),1^{(n-2)},m))$

$\hspace*{4.1cm}=g(r,1^{(n-3)},h(1,-p,0^{(m-2)}),m)$

$\hspace*{4.1cm}\subseteq g(Q,1^{(n-2)},x).$\\
Let $u \in U$. Then for some  $Q_1 \subseteq Q$ we have 

$g(k(r,u,1^{(n-2)}),1^{(n-2)},x)=g(Q_1,1^{(n-2)},x)$

$\hspace*{4.1cm}=f(g(Q_1,1^{(n-2)},x),0^{(m-1)})$\\
This means $0 \in f(g(k(r,u,1^{(n-2)}),1^{(n-2)},x),-g(Q_1,1^{(n-2)},x),0^{(m-2)}))$

$\hspace*{1.7cm}=g(h(k(r,u,1^{(n-2)}),-Q_1,0^{(m-2)}),1^{(m-2)},x)$.\\
Therefore $0 \in h(k(r,u,1^{(n-2)}),-Q_1,0^{(m-2)})$, since $M$ is faithful. This implies that $k(r,u,1^{(n-2)}) \in Q_1$. Thus

$k(u,k(h(1,-p,0^{(m-2)}),r,1^{(n-2)}),1^{(n-2)})=k(k(u,r,1^{(n-2)}),h(1,-p,0^{(m-2)}),1^{(n-2)})$

$\hspace*{6.2cm}\subseteq k(Q_1,h(1,-p,0^{(m-2)}),1^{(n-2)})$

$\hspace*{6.2cm}\subseteq k(Q,h(1,-p,0^{(m-2)}),1^{(n-2)})$

$\hspace*{6.2cm}\subseteq Q$.\\
Since $Q$ is an n-ary primary hyperideal of $R$, we have 

$u \in Q$ or $k(1,k(h(1,-p,0^{(m-2)}),r,1^{(n-2)}),1^{(n-2)}) \in {\sqrt{Q}}^{(m,n)}$.\\
Let $k(1,k(h(1,-p,0^{(m-2)}),r,1^{(n-2)}),1^{(n-2)}) =k(h(1,-p,0^{(m-2)}),r,1^{(n-2)})\in {\sqrt{Q}}^{(m,n)}$. Since $Q$ is an n-ary primary hyperideal of $R$, then ${\sqrt{Q}}^{(m,n)}$ is an n-ary prime hyperideal, by Theorem 4.28 in \cite{sorc1}. Since $r \notin {\sqrt{Q}}^{(m,n)}$ and $1 \notin {\sqrt{Q}}^{(m,n)}$, then $h(1,-p,0^{(m-2)}) \in {\sqrt{Q}}^{(m,n)} \subseteq Q \subseteq E \subseteq P$ and so $1 \in P$, a contradiction. Then we conclude that $u \in Q$. Therefore $g(h(1,-p,0^{(m-2)}),1^{(n-2)},m) = g(U,1^{(n-2)},m) \subseteq g(Q,1^{(n-2)},M)$. By the definition of the set $E$, we have $h(1,-p,0^{(m-2)}) \subseteq E \subseteq P$. This is a contradiction. Consequently, $E=R$ and so $m \in g(1^{(n-1)},m) \subseteq g(Q,1^{(n-2)},M)$.
\end{proof}
\begin{corollary}
Let $(M,f,g)$ be a faithful multiplication $(m,n)$-hypermodule over $(R,h,k)$. Let $N$ be a subhypermodule of $M$ such that $N=g(Q,1^{(n-2)},M)$and $M \neq g(Q,1^{(n-2)},M)$ for some n-ary primary hyperideal $Q$ of $R$. Then $N$ is an $n$-ary primary subhypermodule of $M$. 
\end{corollary}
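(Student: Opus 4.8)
The plan is to derive the statement directly from Lemma~\ref{80}. First note that $N=g(Q,1^{(n-2)},M)$ is a proper subhypermodule of $M$, since $M\neq g(Q,1^{(n-2)},M)$ by hypothesis; hence it remains only to verify the defining implication of an $n$-ary primary subhypermodule.

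So assume $g(r_1^{n-1},x)\subseteq N$ for some $r_1^{n-1}\in R$ and some $x\in M\setminus N$, and put $r=k(r_1^{n-1},1)\in R$. Using the associativity relating $g$ and $k$ together with the fact that $1$ is the scalar identity of $(M,f,g)$, one rewrites $g(r_1^{n-1},x)=g(r,1^{(n-2)},x)$, so that $g(r,1^{(n-2)},x)\subseteq N=g(Q,1^{(n-2)},M)$. Since $x\notin g(Q,1^{(n-2)},M)$, Lemma~\ref{80}---applicable because $M$ is a faithful multiplication $(m,n)$-hypermodule and $Q$ is $n$-ary primary---forces $r\in{\sqrt{Q}}^{(m,n)}$.

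By the description of the radical of a hyperideal there is $t\in\mathbb{N}$ with $k(r^{(t)},1^{(n-t)})\in Q$ when $t\leq n$, and $k_{(l)}(r^{(t)})\in Q$ when $t=l(n-1)+1>n$. Since $r=k(r_1^{n-1},1)$, commutativity of $k$ gives
\[
g(k(r_1^{(t)},1^{(n-t)}),\dots,k(r_{n-1}^{(t)},1^{(n-t)}),M)=g(k(r^{(t)},1^{(n-t)}),1^{(n-2)},M),
\]
and the right-hand side is contained in $g(Q,1^{(n-2)},M)=N$ because $k(r^{(t)},1^{(n-t)})\in Q$; the case $t>n$ is identical with $k_{(l)}$ in place of $k$. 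This is precisely the condition required for $N$ to be an $n$-ary primary subhypermodule.

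The step I expect to be the main obstacle is the notational bookkeeping: carefully justifying $g(r_1^{n-1},x)=g(r,1^{(n-2)},x)$ and the corresponding reduction of $g(k(r_1^{(t)},1^{(n-t)}),\dots,k(r_{n-1}^{(t)},1^{(n-t)}),M)$ to $g(k(r^{(t)},1^{(n-t)}),1^{(n-2)},M)$ from the $(m,n)$-hypermodule axioms (the mixed associativity of $g$ with $k$, the identity properties of $1$, and commutativity of $k$), and keeping the exponent regimes $t\leq n$ and $t=l(n-1)+1$ apart when reading off membership in ${\sqrt{Q}}^{(m,n)}$. Apart from this, the argument is just the single application of Lemma~\ref{80} above; in particular the primeness of ${\sqrt{Q}}^{(m,n)}$ plays no role, only the fact that every element of it has a power lying in $Q$.
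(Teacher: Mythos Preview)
Your argument is correct and follows essentially the same route as the paper: both reduce to a single application of Lemma~\ref{80} applied to $r=k(r_1^{n-1},1)$, after rewriting $g(r_1^{n-1},x)=g(r,1^{(n-2)},x)$. The only cosmetic difference is in the final sentence: the paper observes that $\sqrt{Q}^{(m,n)}\subseteq\sqrt{S_N}^{(m,n)}$ (since $Q\subseteq S_N$) and stops there, implicitly using that $k(r_1^{n-1},1)\in\sqrt{S_N}^{(m,n)}$ is equivalent to the defining condition of an $n$-ary primary subhypermodule, whereas you unpack this directly by producing a $t$ with $k(r^{(t)},1^{(n-t)})\in Q$ and verifying the displayed condition in Definition~2.6 by hand.
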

\begin{proof}
Suppose that $N$ be a subhypermodule of a faithful multiplication $(m,n)$-hypermodule $M$ such that $N=g(Q,1^{(n-2)},M)$ for some n-ary primary hyperideal $Q$ of $R$. Let for $r_1^{n-1} \in R$ and $m \in M \backslash N$, $g(r_1^{n-1},m) \subseteq N$. Since $N=g(Q,1^{(n-2)},M)$ and $g(k(r_1^{n-1},1),1^{(n-2)},m)=g(r_1^{n-1},m) \subseteq g(Q,1^{(n-2)},M)$, we have  $k(r_1^{n-1},1) \in \sqrt{Q}^{(m,n)}$, By Lemma \ref{80}. Since $\sqrt{Q}^{(m,n)} \subseteq \sqrt{S_N}^{(m,n)}$ with $S_N=\{r \in R \ \vert \ g(r,1^{(n-2)},M) \subseteq N\}$, then $N$  is an $n$-ary primary subhypermodule of $M$.  
\end{proof}
\begin{corollary}
Let $(M,f,g)$  be a multiplication  $(m,n)$-hypermodule over $(R,h,k)$. The followings are equivalent for a proper subhypermodule
$N$ of $M$:\\
 (1) $N$ is an $n$-ary primary subhypermodule of $M$.\\
 (2) $S_N=\{r \in R \ \vert \ g(r,1^{(n-2)},M) \subseteq N\}$ is an $n$-ary primary hyperideal of $R$. \\
 (3)  $N = g(Q,1^{(n-2)},M)$ for some $n$-ary primary hyperideal $Q$ of $R$ with $S_0 \subseteq Q$.
\end{corollary}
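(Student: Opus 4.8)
The plan is to prove the cycle $(1)\Rightarrow(2)\Rightarrow(3)\Rightarrow(1)$. First I record two facts used throughout. Since $M$ is a multiplication $(m,n)$-hypermodule, every subhypermodule $K$ satisfies $K=g(S_K,1^{(n-2)},M)$: writing $K=g(I,1^{(n-2)},M)$ for a hyperideal $I$, we have $I\subseteq S_K$, hence $K=g(I,1^{(n-2)},M)\subseteq g(S_K,1^{(n-2)},M)\subseteq K$. Also $S_0\subseteq S_K$ for every $K$ (if $g(r,1^{(n-2)},M)=\{0\}$ then it lies in $K$ since $0\in K$), and $S_K$ is a proper hyperideal exactly when $K$ is a proper subhypermodule, because $g(1^{(n-1)},m)=\{m\}$ gives $g(R,1^{(n-2)},M)=M$. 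For $(1)\Rightarrow(2)$, assume $N$ is an $n$-ary primary subhypermodule and take $a_1^{n}\in R$ with $k(a_1^{n})\subseteq S_N$ and $a_n\notin S_N$. Then $g(a_n,1^{(n-2)},M)\nsubseteq N$, so there are $y\in M$ and $z\in g(a_n,1^{(n-2)},y)$ with $z\notin N$. Using the compatibility of $g$ with the $n$-ary product $k$ (the mixed-associativity axiom of an $(m,n)$-hypermodule) to unpack the $k$-product, together with the definition of $S_N$,
\[g(a_1^{n-1},z)\subseteq g(a_1^{n-1},g(a_n,1^{(n-2)},y))=g(k(a_1^{n}),1^{(n-2)},y)\subseteq N.\]
Since $z\notin N$, the $n$-ary primary condition for $N$ produces a $t$ with $g(k(a_1^{(t)},1^{(n-t)}),\dots,k(a_{n-1}^{(t)},1^{(n-t)}),M)\subseteq N$ (with the $k_{(l)}$ variant when $t>n$); folding the scalars via the same axiom and $g(1^{(n-1)},m)=\{m\}$ rewrites the left-hand side as $g(k((k(a_1^{n-1},1))^{(t)},1^{(n-t)}),1^{(n-2)},M)$, whence $k(a_1^{n-1},1)\subseteq\sqrt{S_N}^{(m,n)}$. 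By commutativity of $k$ the same argument applies after singling out any $a_i$, so $S_N$ is an $n$-ary primary hyperideal of $R$.

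For $(2)\Rightarrow(3)$ take $Q=S_N$: it is $n$-ary primary by hypothesis, contains $S_0$, and satisfies $N=g(S_N,1^{(n-2)},M)=g(Q,1^{(n-2)},M)$ by the fact recorded above. For $(3)\Rightarrow(1)$, put $\bar R=R/S_0$. Then $M$ is a faithful multiplication $(m,n)$-hypermodule over $\bar R$ (as already used in the proof of Theorem~\ref{15}; cf.\ \cite{sorc2}), and the $R$-action on $M$ factors through $\bar R$ because $g(r,1^{(n-2)},m)=g(r',1^{(n-2)},m)$ whenever $r-r'\in S_0$. Since $S_0\subseteq Q$, the image $\bar Q=Q/S_0$ is an $n$-ary primary hyperideal of $\bar R$, and $g(\bar Q,1^{(n-2)},M)=g(Q,1^{(n-2)},M)=N\neq M$ as $N$ is proper. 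The preceding Corollary, applied to the faithful multiplication $(m,n)$-hypermodule $M$ over $\bar R$, shows that $N$ is an $n$-ary primary subhypermodule of $M$ over $\bar R$, and since the action factors through $\bar R$ this is the same as being one over $R$.

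The main obstacle is the bookkeeping in $(1)\Rightarrow(2)$: one must be careful in using the mixed-associativity axiom (compatibility of $g$ with the $n$-ary product $k$) together with $g(1^{(n-1)},m)=\{m\}$ to move between the $(n-1)$-variable action $g(a_1^{n-1},-)$ and the single-scalar action $g(c,1^{(n-2)},-)$, and to carry the exponent $t$ correctly through both the $t\le n$ branch and the $t>n$ ($k_{(l)}$) branch of the definition of an $n$-ary primary subhypermodule. A minor additional point is to confirm that $M$ is a faithful multiplication $(m,n)$-hypermodule over $R/S_0$ and that a quotient of an $n$-ary primary hyperideal by a sub-hyperideal it contains is again $n$-ary primary; both are standard, and the former is already invoked in the proof of Theorem~\ref{15}.
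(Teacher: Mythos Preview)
Your argument is correct, and it follows the same cycle $(1)\Rightarrow(2)\Rightarrow(3)\Rightarrow(1)$ as the paper, but the execution differs in two places. For $(1)\Rightarrow(2)$ the paper simply cites Theorem~5.3 of \cite{sorc3}, whereas you unpack the argument directly via the mixed-associativity axiom; your version is self-contained and makes the power-folding step explicit. For $(3)\Rightarrow(1)$ the approaches genuinely diverge: the paper works over $R$ itself and applies Lemma~\ref{80} repeatedly, peeling off one scalar $r_i$ at a time from $g(r_1^{n-1},m)\subseteq g(Q,1^{(n-2)},M)$ until it isolates some $r_i\in\sqrt{Q}^{(m,n)}$; you instead pass to $\bar R=R/S_0$ to make $M$ faithful and then invoke the preceding corollary once as a black box. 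Your route is shorter and, notably, makes explicit use of the hypothesis $S_0\subseteq Q$ (needed so that $\bar Q=Q/S_0$ is a well-defined $n$-ary primary hyperideal of $\bar R$), a point the paper's direct application of Lemma~\ref{80} leaves implicit even though that lemma is stated only under a faithfulness assumption. The paper's approach, on the other hand, exhibits concretely how the scalars $r_1,\dots,r_{n-1}$ are handled one by one and avoids having to check that primary subhypermodules over $R$ and over $\bar R$ coincide.
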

\begin{proof}
$(1) \Longrightarrow (2)$ It follows  from Theorem 5.3 in \cite{sorc3}.\\
$(2) \Longrightarrow (3)$ It is clear by Remark 3.2 in \cite{sorc2}.\\
$(3) \Longrightarrow (1)$ Let for $r_1^{n-1} \in R$ and $m \in M \backslash N$, $g(r_1^{n-1},m) \subseteq N$. Then,

$g(1^{(n-1)},g(r_1,1^{(n-2)},g(k(r_2^{n-1},1^{(2)}),1^{(n-2)},m)))$

$\hspace{0.5cm}=g(1^{(n-2)},k(r_1,1^{(n-1)},g(k(r_2^{n-1},1^{(2)}),1^{(n-2)},m))$

$\hspace{0.5cm} \vdots$

$\hspace{0.5cm}=g(1,k(r_1,1^{(n-1)}),...,k(r_{n-2},1^{(n-1)}),g(r_{n-1},1^{(n-2)},m))$

$\hspace{0.5cm}=g(k(r_1,1^{(n-1)}),...,k(r_{n-2},1^{(n-1)}),k(r_{n-1},1^{(n-1)}),m)$

$\hspace{0.5cm}=(r_1^{n-1},m)$

 $\hspace{0.5cm}\subseteq N=g(Q,1^{(n-2)},M)$\\
 for n-ary primary hyperideal $Q$ of $R$. Since $1 \notin \sqrt{Q}^{(m,n)}$, then 
 
 $g(r_1,1^{(n-2)},g(k(r_2^{n-1},1^{(2)}),1^{(n-2)},m)) \subseteq g(Q,1^{(n-2)},M)$\\
 by Lemma \ref{80}. By using Lemma \ref{80} again, we have 
 
 $r_1 \in \sqrt{Q}^{(m,n)}$ or $g(k(r_2^{n-1},1^{(2)}),1^{(n-2)},m) \subseteq g(Q,1^{(n-2)},M)$\\
In the second case, we have 
 
 $g(r_2, g(r_3^{n-1},1^{(2)}),1^{(n-3)},m)) \subseteq g(Q,1^{(n-2)},M)$\\
 Therefore we get

 $r_2 \in \sqrt{Q}^{(m,n)}$ or $g(r_3^{n-1},1^{(2)}),1^{(n-3)},m) \subseteq g(Q,1^{(n-2)},M)$,\\
 by Lemma \ref{80}.  In the second case, by using a similar argument we get $r_{n-1} \in  \sqrt{Q}^{(m,n)}$ or $m \in g(Q,1^{(n-1)},M)=N$.  Since $m \notin  N$, then we have $r_i \in  \sqrt{Q}^{(m,n)}$ for some $1 \leq i \leq n-1$ and so  $k(r_1^{n-1},1) \in \sqrt{Q}^{(m,n)}$.  Since $\sqrt{Q}^{(m,n)} \subseteq \sqrt{S_N}^{(m,n)}$, then $k(r_1^{n-1},1) \in \sqrt{S_N}^{(m,n)}$. It follows that 
 $N$ is an $n$-ary primary subhypermodule of $M$.
\end{proof}

\section{ intersections and  sums of the multiplication $(m,n)$-hypermodule}
\begin{theorem} \label{21}
Let $(M,f,g)$  be an $(m,n)$-hypermodule over $(R,h,k)$ such that $M=f_{(l)}(N_{\lambda}^{(\lambda \in \Lambda)})$ for some multiplication subhypermodules $N_{\lambda}$ of $M$ and $\lambda =l(m-1)+1$. The followings are equivalent:\\
$(1)$ $M$ is a multiplication $(m,n)$-hypermodule.\\
$(2)$ $N_{\lambda}=g(S_{N_{\lambda}},1^{(n-2)},M)$, for each $\lambda \in \Lambda$\\
$(3)$ $R=h(A(m),B,0^{(m-2)})$  for each $m \in M$ such that $A(m)=\{r \in R \ \vert \ 0 \in g(r,1^{(n-2)},m)\}$ and $B=f_{(l)}(S_{N_{\lambda}}^{(\lambda \in \Lambda)})$\\
$(4)$ for every maximal hyperideal $P$ of $R$ either $M=\mathcal{X}_P(M)$ or there exist $ a \in \bigcup_{\lambda \in \Lambda} N_{\lambda}$ and $p \in P$ such that $g(h(1,-p,0^{(m-2)}),1^{(n-2)},M) \subseteq g(R,1^{(n-2)},a)$
\end{theorem}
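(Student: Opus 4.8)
The plan is to close the loop $(1)\Rightarrow(2)\Rightarrow(3)\Rightarrow(4)\Rightarrow(1)$, the whole conceptual content being carried by Theorem~3.8 of \cite{sorc2}: an $(m,n)$-hypermodule over $R$ is multiplication if and only if for every maximal hyperideal $P$ of $R$ one has $\mathcal{X}_P(\cdot)=\cdot$ or it is $n$-ary $P$-cyclic; as in the proof of Theorem~\ref{19} I shall also use that for a \emph{multiplication} hypermodule $N$ the conditions $g(P,1^{(n-2)},N)=N$ and $\mathcal{X}_P(N)=N$ are equivalent. Apart from this I only need that $g(P,1^{(n-2)},M)$ and $\mathcal{X}_P(M)$ are subhypermodules, that $\mathcal{X}_P(N)\subseteq\mathcal{X}_P(M)$ for $N\le M$, that a maximal hyperideal is $n$-ary prime and closed under $h$, and the associativity identity $g(k(I,J,1^{(n-2)}),1^{(n-2)},M)=g(I,1^{(n-2)},g(J,1^{(n-2)},M))$ used throughout Section~3. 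Two of the four implications are then short. For $(4)\Rightarrow(1)$: for each maximal $P$ the element $a\in\bigcup_{\lambda}N_\lambda\subseteq M$ supplied by $(4)$ makes $M$ itself $n$-ary $P$-cyclic, so the criterion of Theorem~3.8 is met at every $P$. For $(1)\Rightarrow(2)$: $g(S_{N_\lambda},1^{(n-2)},M)\subseteq N_\lambda$ by the definition of $S_{N_\lambda}$, and writing $N_\lambda=g(I_\lambda,1^{(n-2)},M)$ we get $I_\lambda\subseteq S_{N_\lambda}$, hence $N_\lambda=g(I_\lambda,1^{(n-2)},M)\subseteq g(S_{N_\lambda},1^{(n-2)},M)$.

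For $(2)\Rightarrow(3)$ I argue by contradiction: if $(3)$ fails at some $m\in M$, then $h(A(m),B,0^{(m-2)})$ is contained in a maximal hyperideal $P$, so $A(m)\subseteq P$ and, since each $S_{N_\lambda}\subseteq B\subseteq P$, also $S_{N_\lambda}\subseteq P$ for every $\lambda$. By $(2)$, $N_\lambda=g(S_{N_\lambda},1^{(n-2)},M)\subseteq g(P,1^{(n-2)},M)$; since $g(P,1^{(n-2)},M)$ is a subhypermodule, $M=f_{(l)}(N_\lambda^{(\lambda\in\Lambda)})\subseteq g(P,1^{(n-2)},M)$, i.e.\ $g(P,1^{(n-2)},M)=M$. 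The key step is that this forces $N_\lambda=g(P,1^{(n-2)},N_\lambda)$ for each $\lambda$: indeed $N_\lambda=g(S_{N_\lambda},1^{(n-2)},M)=g(S_{N_\lambda},1^{(n-2)},g(P,1^{(n-2)},M))=g(k(S_{N_\lambda},P,1^{(n-2)}),1^{(n-2)},M)$, and by commutativity of $k$ the last term equals $g(P,1^{(n-2)},g(S_{N_\lambda},1^{(n-2)},M))=g(P,1^{(n-2)},N_\lambda)$. As $N_\lambda$ is multiplication this yields $\mathcal{X}_P(N_\lambda)=N_\lambda$, so $\bigcup_\lambda N_\lambda\subseteq\mathcal{X}_P(M)$ and $M=\mathcal{X}_P(M)$; then $m\in\mathcal{X}_P(M)$ provides $p\in P$ and $c\in h(1,-p,0^{(m-2)})$ with $c\in A(m)\subseteq P$, whence $1\in h(c,p,0^{(m-2)})\subseteq P$, a contradiction.

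Finally $(3)\Rightarrow(4)$. Fix a maximal hyperideal $P$; if $M=\mathcal{X}_P(M)$ the first alternative holds, so choose $m^*\in M\setminus\mathcal{X}_P(M)$. First one checks $A(m^*)\subseteq P$: were some $r\in A(m^*)$ outside $P$, maximality of $P$ would give $s\in R$ and $p\in P$ with $k(s,r,1^{(n-2)})\in h(1,-p,0^{(m-2)})\cap A(m^*)$, forcing $m^*\in\mathcal{X}_P(M)$. Then $(3)$ gives $R=h(A(m^*),B,0^{(m-2)})\subseteq h(P,B,0^{(m-2)})$, so $B\nsubseteq P$ and hence $S_{N_{\lambda_0}}\nsubseteq P$ for some $\lambda_0$ (as $B$ is generated by the $S_{N_\lambda}$); fix $r_0\in S_{N_{\lambda_0}}\setminus P$, so $g(r_0,1^{(n-2)},M)\subseteq N_{\lambda_0}$. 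Applying Theorem~3.8 of \cite{sorc2} to the multiplication hypermodule $N_{\lambda_0}$, the case $\mathcal{X}_P(N_{\lambda_0})=N_{\lambda_0}$ is excluded: it would give $g(r_0,1^{(n-2)},m^*)\subseteq N_{\lambda_0}\subseteq\mathcal{X}_P(M)$, hence $k(d,r_0,1^{(n-2)})\in A(m^*)$ for some $d\in h(1,-p,0^{(m-2)})$, $p\in P$; but $d\notin P$ (else $1\in h(d,p,0^{(m-2)})\subseteq P$) and $r_0\notin P$, so by $n$-ary primeness of $P$ this element escapes $P$, contradicting $A(m^*)\subseteq P$. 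Thus $N_{\lambda_0}$ is $n$-ary $P$-cyclic: $g(h(1,-q,0^{(m-2)}),1^{(n-2)},N_{\lambda_0})\subseteq g(R,1^{(n-2)},a)$ for some $a\in N_{\lambda_0}$ and $q\in P$. Substituting $g(r_0,1^{(n-2)},M)\subseteq N_{\lambda_0}$ into this and multiplying by $s\in R$ with $k(s,r_0,1^{(n-2)})\in h(1,-p_1,0^{(m-2)})$ (possible since $r_0\notin P$) gives $g(h(k(s,r_0,1^{(n-2)}),-k(s,r_0,q,1^{(n-3)}),0^{(m-2)}),1^{(n-2)},M)\subseteq g(R,1^{(n-2)},a)$; using $k(s,r_0,1^{(n-2)})\in h(1,-p_1,0^{(m-2)})$ and closure of $P$ under $h$, one then extracts $q'\in P$ with $g(h(1,-q',0^{(m-2)}),1^{(n-2)},M)\subseteq g(R,1^{(n-2)},a)$, and since $a\in\bigcup_\lambda N_\lambda$ this is the second alternative of $(4)$. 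I expect the genuine obstacle to be precisely this last step — turning the hyperproduct of $k(s,r_0,1^{(n-2)})$ with $h(1,-q,0^{(m-2)})$ into a bona fide $h(1,-q',0^{(m-2)})$ with $q'\in P$ at the level of the $m$-ary hyperaddition — and, more diffusely, the bookkeeping of $n$-ary products against $m$-ary hypersums that pervades all four implications.
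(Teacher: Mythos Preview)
Your plan is correct and follows the same cycle $(1)\Rightarrow(2)\Rightarrow(3)\Rightarrow(4)\Rightarrow(1)$ as the paper, with Theorem~3.8 of \cite{sorc2} as the pivot; the implications $(1)\Rightarrow(2)$ and $(4)\Rightarrow(1)$ are identical to the paper's. Two points of comparison are worth recording.

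For $(2)\Rightarrow(3)$ you take a more conceptual route than the paper. The paper does not invoke $\mathcal{X}_P$ at all here: having obtained $M=g(Q,1^{(n-2)},M)$, it writes the given $m$ as $m\in f_{(l)}(m_\lambda^{(\lambda\in\Lambda)})$ with $m_\lambda\in N_\lambda$, uses the multiplication hypothesis on each $N_\lambda$ together with $(2)$ to show $m_\lambda\in g(Q,1^{(n-2)},m_\lambda)$, extracts $q_\lambda\in Q$ with $0\in g(h(1,-q_\lambda,0^{(m-2)}),1^{(n-2)},m_\lambda)$, and then multiplies these together to land in $A(m)\subseteq Q$, forcing $1\in Q$. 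Your argument via $N_\lambda=g(P,1^{(n-2)},N_\lambda)\Rightarrow\mathcal{X}_P(N_\lambda)=N_\lambda\Rightarrow M=\mathcal{X}_P(M)$ is shorter, but it trades the elementwise bookkeeping for the assumption that $\mathcal{X}_P(M)$ is a subhypermodule (which the paper does not need at this step).

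For $(3)\Rightarrow(4)$ the paper's route is very close to yours, and in particular it resolves the obstacle you flag explicitly. Rather than picking a single $r_0\in S_{N_{\lambda_0}}\setminus P$ and then hunting for $s$, the paper uses maximality to get the whole hyperset $h(1,-p,0^{(m-2)})\subseteq S_{N_\lambda}$, so that $g(h(1,-p,0^{(m-2)}),1^{(n-2)},M)\subseteq N_\lambda$ directly. Combined with the $n$-ary $P$-cyclicity $g(h(1,-q,0^{(m-2)}),1^{(n-2)},N_\lambda)\subseteq g(R,1^{(n-2)},a)$, one is reduced to showing that the product $k(h(1,-p,0^{(m-2)}),h(1,-q,0^{(m-2)}),1^{(n-2)})$ again has the form $h(1,-t,0^{(m-2)})$ with $t\in P$; the paper carries this out by an explicit distributive expansion, obtaining $t\in h(p,q,-k(p,q,1^{(n-2)}),0^{(m-3)})\subseteq P$. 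This factoring identity is exactly the missing ingredient in your last step, and it would equally close your version of the argument.
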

\begin{proof}
$(1) \Longrightarrow (2)$ It is clear.\\
$(2) \Longrightarrow (3)$ Let $R \neq h(A(m),B,0^{(m-2)})$ for some $m \in M$. Then  there exists some maximal hyperideal $Q$ of $R$ such that $h(A(m),B,0^{(m-2)}) \subseteq Q$. Then $B \subseteq Q$ which implies $g(B,1^{(n-2)},M) \subseteq g(Q,1^{(n-2)},M)$ which means $g(f_{(l)}(S_{N_{\lambda}}^{(\lambda \in \Lambda)}),1^{(n-2)},M) \subseteq g(Q,1^{(n-2)},M)$. Thus $f_{(l)}(N_{\lambda}^{(\lambda \in \Lambda)})=M \subseteq g(Q,1^{(n-2)},M)$ and so $M=g(Q,1^{(n-2)},M)$. Since $m \in M=f_{(l)}(N_{\lambda}^{(\lambda \in \Lambda)})$ then $m \in f_{(l)}(m_{\lambda}^{(\lambda \in \Lambda)})$ for some elements $m_{\lambda} \in N_{\lambda}$. Since $N_{\lambda}$ is a multiplication $(m,n)$-hypermodule then there exists hyperideal $I_{\lambda}$ of $R$ such that $g(R, 1^{(n-2)},m_{\lambda})=g(I_{\lambda},1^{(n-2)},N_{\lambda})$.
Thus we get

$\hspace{1cm}g(R, 1^{(n-2)},m_{\lambda})=g(I_{\lambda},1^{(n-2)},N_{\lambda})$.

$\hspace{3.6cm}=g(I_{\lambda},1^{(n-2)},g(S_{N_{\lambda}},1^{(n-2)},M)$.

$\hspace{3.6cm}=g(I_ {\lambda},S_{N_{\lambda}},1^{(n-3)},M)$

$\hspace{3.6cm}=g(I_ {\lambda},S_{N_{\lambda}},1^{(n-3)},g(Q,1^{(n-2)},M))$

$\hspace{3.6cm}=g(I_ {\lambda},Q,1^{(n-3)},g(S_{N_{\lambda}},1^{(n-2)},M))$

$\hspace{3.6cm} \subseteq g(I_ {\lambda},Q,1^{(n-3)},{N_{\lambda}})$

$\hspace{3.6cm}=g(Q,1^{(n-2)},g(I_ {\lambda},1^{(n-2)},{N_{\lambda}}))$

$\hspace{3.6cm}=g(Q,1^{(n-2)},g(R, 1^{(n-2)},m_{\lambda}))$

$\hspace{3.6cm}=g(Q,R, 1^{(n-3)},m_{\lambda})$

$\hspace{3.6cm}=g(Q, 1^{(n-2)},m_{\lambda})$. \\
Then there exists $q_{\lambda} \in Q$ such that $m_{\lambda} \in f(g(q_{\lambda},1^{(n-2)},m_{\lambda}),0^{(m-1)})$ which implies 

$ \hspace{1cm} 0 \in f(-g(q_{\lambda},1^{(n-2)},m_{\lambda}),m_{\lambda},0^{(m-2)})$

$\hspace{1.3cm}\subseteq f(g(-q_{\lambda},1^{(n-2)},m_{\lambda}),g(1^{(n-1)},m_{\lambda})
,g(0,1^{(n-2)},m_{\lambda}))$

$\hspace{1.3cm}=g(h(-q_{\lambda},1,0^{(m-2)}),1^{(n-2)},m_{\lambda}))$.
\\Thus we get $0 \in g(g(h(-q_{\lambda},1,0^{(m-2)})^{(\lambda \in \Lambda)}),1^{(n-2)},m)$ which means 

$g(h(-q_{\lambda},1,0^{(m-2)})^{(\lambda \in \Lambda)}) \in A(m) \subseteq Q$. \\Since all $q_{\lambda}$'s belong to $Q$ then we conclude that $1 \in Q$ which is a contradiction. Consequently,  
$R= h(A(m),B,0^{(m-2)})$.\\
$(3)\Longrightarrow (4)$ Suppose that for some maximal hyperideal $P$ of $R$,  $M \neq \mathcal{X}_P(M)$. Then there exists some $m \in M$ such that $0 \notin g(h(1,-p,0^{(m-2)}),1^{(n-2)},m)$ for all $p \in P$. If $A(m) \nsubseteq  P$ then there exists $p \in P$ such that $h(1,-p,0^{(m-2)}) \subseteq  A(m)$ which means $0 \in g(h(1,-p,0^{(m-2)}),1^{(n-2)},m)$ which is a contradiction. Then $A(m) \subseteq P$ and so $B \notin P$ by (3). This follows that $S_{N_{\lambda}} \nsubseteq P$ for some $\lambda \in \Lambda$. Then there exists  $p \in P$ such that $h(1,-p,0^{(m-2)}) \subseteq  S_{N_{\lambda}}$. Thus $g(h(1,-p,0^{(m-2)}),1^{(n-2)},M) \subseteq g(S_{N_{\lambda}},1^{(n-2)},M) \subseteq N_{\lambda}$. Let $\mathcal{X}_P(N_{\lambda})=N_{\lambda}$ and  $x \in g(S_{N_{\lambda}},1^{(n-2)},m) \subseteq N_{\lambda}$. Then for all $p \in P$ we have \\ $0 \in g(h(1,-p,0^{(m-2)}),1^{(n-2)},x) \subseteq g(h(1,-p,0^{(m-2)}),1^{(n-2)},g(S_{N_{\lambda}},1^{(n-2)},m))$

$\hspace{4.7cm}=g(g(h(1,-p,0^{(m-2)}),S_{N_{\lambda}},1^{(n-2)}),1^{(n-3)},m)$\\
Therefore $g(h(1,-p,0^{(m-2)}),S_{N_{\lambda}},1^{(n-2)}) \subseteq A(m) \subseteq P$. Since $P$ is an n-ary prime hyperideal and $h(1,-p,0^{(m-2)}) \nsubseteq P$ then we get $S_{N_{\lambda}} \subseteq P$ which is a contradiction. Then $\mathcal{X}_P(N_{\lambda}) \neq N_{\lambda}$. Hence $N_{\lambda}$ is  n-ary $P$-cyclic, by Theorem 3.8 in \cite{sorc2}. Then there exist $a \in  N_{\lambda}$ and $q \in Q$ such that $g(h(1,-p,0^{(m-2)}),1^{(n-2)},N_{\lambda}) \subseteq g(R,1^{(n-2)},a)$. By $g(h(1,-p,0^{(m-2)}),1^{(n-2)},M) \subseteq g(S_{N_{\lambda}},1^{(n-2)},M) \subseteq N_{\lambda}$ it follows that $g(h(1,-q,0^{(m-2)}),h(1,-p,0^{(m-2)}),1^{(n-3)},M) \subseteq g(R,1^{(n-2)},a)$. It is clear that $h(p,q,-k(p,q,1^{(n-2)}),0^{(m-3)}) \subseteq P$. \\Let $t \in h(p,q,-k(p,q,1^{(n-2)}),0^{(m-3)})$. Then 

$ g(h(1,-t,0^{(m-2)}),1^{(n-2)},M)$

$ \hspace{0.7cm} \subseteq g(h(1,-(h(p,q,-k(p,q,1^{(n-2)}),0^{(m-3)}),0^{(m-2)}),1^{(n-2)},M)$

$\hspace{0.7cm}=g(h(1,-p,-q,k(p,q,1^{(n-2)}),0^{(m-4)}),1^{(n-2)},M)$

$\hspace{0.7cm}=g(h(1,-p,h(-q,k(p,q,1^{(n-2)}),0^{(m-2)})),0^{(m-3)}),1^{(n-2)},M)$

$\hspace{0.7cm}=g(h(1,-p,h(-q,-(-k(p,q,1^{(n-2)})),0^{(m-2)})),0^{(m-3)}),1^{(n-2)},M)$

$\hspace{0.7cm}=g(h(1,-p,h(-q,k(-p,-q,1^{(n-2)})),0^{(m-2)})),0^{(m-3)}),1^{(n-2)},M)$

$\hspace{0.7cm}=g(h(1,-p,k(-q,h(1,-p,0^{(m-2)}),1^{(n-2)})),0^{(m-3)}),1^{(n-2)},M)$

$\hspace{0.7cm}=g(h(h(1,-p,0^{(m-2)}),k(-q,h(1,-p,0^{(m-2)}),1^{(n-2)})),0^{(m-2)}),1^{(n-2)},M)$

$\hspace{0.7cm}=g(k(h(1,-p,0^{(m-2)}),h(1,-q,0^{(m-2)}),1^{(m-2)}),1^{(n-2)},M)$

$\hspace{0.7cm} \subseteq g(R,1^{(n-2)},a)$.\\

$(4) \Longrightarrow (1) $ This is obvious by Theorem 3.8 in \cite{sorc2}.
\end{proof}
\begin{corollary} \label{22}
Let $(M,f,g)$  be an $(m,n)$-hypermodule over $(R,h,k)$ such that $R=h_{(l)}(S_{N_{\lambda}}^{(\lambda \in \Lambda)})$ for some multiplication subhypermodules $N_{\lambda}$ of $M$ and $\lambda =l(m-1)+1$. Then $M$ is a multiplication hypermodule.
\end{corollary}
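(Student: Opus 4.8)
The plan is to deduce the statement from Theorem~\ref{21} by first checking that its standing hypothesis is automatically met and then verifying its condition~(3).

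\textbf{Step 1: the hypothesis $M=f_{(l)}(N_\lambda^{(\lambda\in\Lambda)})$ is forced.} Since the scalar identity $1$ of $R$ acts trivially, $g(1^{(n-1)},m)=\{m\}$ for every $m\in M$, and therefore $g(R,1^{(n-2)},M)=M$. Substituting the hypothesis $R=h_{(l)}(S_{N_{\lambda}}^{(\lambda\in\Lambda)})$ and distributing $g$ over $h$ repeatedly (axiom (ii) in the definition of an $(m,n)$-hypermodule, applied to each occurrence of $h$ inside $h_{(l)}$) gives $M=g(h_{(l)}(S_{N_{\lambda}}^{(\lambda\in\Lambda)}),1^{(n-2)},M)=f_{(l)}(g(S_{N_{\lambda}},1^{(n-2)},M)^{(\lambda\in\Lambda)})$. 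By the definition of $S_{N_\lambda}$ we have $g(S_{N_{\lambda}},1^{(n-2)},M)\subseteq N_{\lambda}$, so by monotonicity of $f_{(l)}$ on subsets, $M\subseteq f_{(l)}(N_{\lambda}^{(\lambda\in\Lambda)})\subseteq M$, i.e. $M=f_{(l)}(N_{\lambda}^{(\lambda\in\Lambda)})$. Thus $(M,f,g)$ with the family $\{N_\lambda\}$ lies in the setting of Theorem~\ref{21}.

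\textbf{Step 2: condition~(3) of Theorem~\ref{21} holds.} The hyperideal $B=h_{(l)}(S_{N_{\lambda}}^{(\lambda\in\Lambda)})$ occurring in that condition is, by hypothesis, equal to $R$. Fix $m\in M$. Since $g(0,1^{(n-2)},m)=\{0\}$, we have $0\in A(m)=\{r\in R\mid 0\in g(r,1^{(n-2)},m)\}$. As $0$ is the scalar identity of the canonical $m$-ary hypergroup $(R,h)$, we have $h(0,r,0^{(m-2)})=\{r\}$ for every $r\in R$, so $h(A(m),B,0^{(m-2)})=h(A(m),R,0^{(m-2)})\supseteq\bigcup_{r\in R}h(0,r,0^{(m-2)})=R$; the reverse inclusion is automatic, hence $R=h(A(m),B,0^{(m-2)})$ for every $m\in M$.

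\textbf{Step 3: conclude.} The implication $(3)\Rightarrow(1)$ of Theorem~\ref{21} now yields that $(M,f,g)$ is a multiplication $(m,n)$-hypermodule. The only delicate points are the bookkeeping in the iterated distributivity of Step~1 and reading the ``$B$'' of Theorem~\ref{21} as the $m$-ary sum $h_{(l)}(S_{N_{\lambda}}^{(\lambda\in\Lambda)})$ inside $R$ (which the hypothesis sets equal to $R$); once these are settled the argument is immediate, so I do not anticipate a genuine obstacle. An alternative to Steps~2--3 would be to verify condition~(4) directly: for any maximal hyperideal $P$, properness forces $S_{N_\lambda}\nsubseteq P$ for some $\lambda$, and one extracts $p\in P$ with $h(1,-p,0^{(m-2)})\subseteq S_{N_\lambda}$ and then uses that $N_\lambda$ is multiplication; but this route is longer than the one above.
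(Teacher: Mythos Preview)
Your proof is correct and follows essentially the same route as the paper. The paper's argument is the computation in your Step~1 verbatim, establishing $M=f_{(l)}(N_\lambda^{(\lambda\in\Lambda)})$, and then simply invokes Theorem~\ref{21} without spelling out which of the equivalent conditions is being used; your Step~2 makes explicit that it is condition~(3), with $B=R$ forcing $h(A(m),B,0^{(m-2)})=R$ trivially.
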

\begin{proof}
Since

$\hspace{1cm} M=g(R,1^{(n-2)},M)$

 $\hspace{1.5cm} =g(h_{(l)}(S_{N_{\lambda}}^{(\lambda \in \Lambda)}),1^{(n-2)},M)$

$\hspace{1.5cm} =f_{(l)}(g(S_{N_{\lambda}},1^{(n-2)},M)^{(\lambda \in \Lambda)}))$

$\hspace{1.5cm}  \subseteq f_{(l)}(N_{\lambda}^{(\lambda \in \Lambda)})$

$\hspace{1.5cm}  \subseteq M$,\\
then $M=f_{(l)}(N_{\lambda}^{(\lambda \in \Lambda)})$. Thus $M$ is a multiplication hypermodule, by Theorem \ref{21}.
\end{proof}
\begin{theorem} \label{23}
Let $(M,f,g)$  be an $(m,n)$-hypermodule over $(R,h,k)$ such that $M=f_{(l)}(N_{\lambda}^{(\lambda \in \Lambda)})$ for some multiplication subhypermodules $N_{\lambda}$ of $M$ and $\lambda =l(m-1)+1$. Then for each subhypermodule $N$ of $M$, $N=f_{(l)}((N \cap N_{\lambda})^{(\lambda \in \Lambda)})$.
\end{theorem}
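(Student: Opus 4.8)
The plan is to prove the two inclusions of $N = f_{(l)}\bigl((N\cap N_\lambda)^{(\lambda\in\Lambda)}\bigr)$ separately, using the fact that $M$ is multiplication (which follows from the hypothesis $M=f_{(l)}(N_\lambda^{(\lambda\in\Lambda)})$ together with Theorem \ref{21}, since each $N_\lambda$ is multiplication). The inclusion $\supseteq$ is the easy one: each $N\cap N_\lambda$ is a subhypermodule contained in $N$, and since $(N,f)$ is an $m$-ary subhypergroup of $(M,f)$, applying $f_{(l)}$ to elements of $N$ stays inside $N$; hence $f_{(l)}\bigl((N\cap N_\lambda)^{(\lambda\in\Lambda)}\bigr)\subseteq f_{(l)}(N^{(\lambda\in\Lambda)})\subseteq N$. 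I would state this in one line.

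For the reverse inclusion $N\subseteq f_{(l)}\bigl((N\cap N_\lambda)^{(\lambda\in\Lambda)}\bigr)$, I would first use that $M$ is multiplication to write $N=g(I,1^{(n-2)},M)$ for some hyperideal $I$ of $R$. Then I would distribute $g(I,1^{(n-2)},\cdot)$ over the decomposition $M=f_{(l)}(N_\lambda^{(\lambda\in\Lambda)})$ using axiom $(i)$ of Definition (the $(m,n)$-hypermodule distributive law, iterated $l$ times, exactly as $f_{(l)}$ is built from $f$):
\begin{align*}
N &= g(I,1^{(n-2)},M)\\
  &= g\bigl(I,1^{(n-2)},f_{(l)}(N_\lambda^{(\lambda\in\Lambda)})\bigr)\\
  &= f_{(l)}\bigl(g(I,1^{(n-2)},N_\lambda)^{(\lambda\in\Lambda)}\bigr).
\end{align*}
Now for each $\lambda$ we have $g(I,1^{(n-2)},N_\lambda)\subseteq g(R,1^{(n-2)},N_\lambda)\subseteq N_\lambda$ since $N_\lambda$ is a subhypermodule, and also $g(I,1^{(n-2)},N_\lambda)\subseteq g(I,1^{(n-2)},M)=N$; hence $g(I,1^{(n-2)},N_\lambda)\subseteq N\cap N_\lambda$. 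Substituting this into the display and using monotonicity of $f_{(l)}$ gives $N\subseteq f_{(l)}\bigl((N\cap N_\lambda)^{(\lambda\in\Lambda)}\bigr)$, which completes the proof.

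The main obstacle I anticipate is purely notational/technical rather than conceptual: one must be careful that the distributive law $(i)$ of Definition, which is stated for a single application of $f$ inside $g$, genuinely propagates through the iterated operation $f_{(l)}$ when $\Lambda$ is infinite and $f_{(l)}(N_\lambda^{(\lambda\in\Lambda)})$ is interpreted as a (possibly infinite) sum of subhypermodules. I would handle this by noting that any element of $f_{(l)}(N_\lambda^{(\lambda\in\Lambda)})$ already lies in $f_{(l)}$ of finitely many of the $N_\lambda$ (padded with $0$'s), reducing everything to the finitary distributive law, and that the set equalities above should be read as the standard equalities of the associated subhypermodules (sums), for which the finite case suffices. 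Apart from this bookkeeping, the argument is a direct distribution-and-intersect computation.
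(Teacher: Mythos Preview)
Your argument is correct and matches the paper's proof essentially line for line: the paper writes $N=g(S_N,1^{(n-2)},M)$, distributes $g$ over $f_{(l)}(N_\lambda^{(\lambda\in\Lambda)})$, and then observes $g(S_N,1^{(n-2)},N_\lambda)\subseteq N\cap N_\lambda$ and $f_{(l)}((N\cap N_\lambda)^{(\lambda\in\Lambda)})\subseteq N$ to close the chain of inclusions, exactly as you do with a generic hyperideal $I$ in place of $S_N$. One small correction: Theorem~\ref{21} gives conditions \emph{equivalent} to $M$ being multiplication rather than deducing it from the stated hypotheses alone, so your appeal to it is not quite right---but the paper's own proof tacitly assumes $M$ is multiplication in writing $N=g(S_N,1^{(n-2)},M)$, so your argument is faithful to the paper's intent.
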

\begin{proof}
Let $N$ be a arbitrary subhypermodule  of $M$. Then 

$\hspace{1cm} N=g(S_N,1^{(n-2)},M)$

$\hspace{1.4cm} =g(S_N,1^{(n-2)},f_{(l)}(N_{\lambda}^{(\lambda \in \Lambda)}))$

$\hspace{1.4cm} =f_{(l)}((g(S_N,1^{(n-2)},N_{\lambda})^{(\lambda \in \Lambda)})$

$\hspace{1.4cm} \subseteq f_{(l)}((N \cap N_{\lambda})^{(\lambda \in \Lambda)})$

$\hspace{1.4cm} \subseteq N$. \\
Consequently, $N=f_{(l)}((N \cap N_{\lambda})^{(\lambda \in \Lambda)})$. 
\end{proof}
\begin{theorem} \label{new3}
Let $(M,f,g)$  be an $(m,n)$-hypermodule over $(R,h,k)$. Let $H$ and $K$ be two multiplication subhypermodules of $M$ such that $H$, $K$ and $f(H,K,1^{(n-2)})$ are multiplication hypermodules .Then $H \cap K$ is  a multiplication hypermodule.
\end{theorem}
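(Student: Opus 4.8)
The plan is to deduce this from the local criterion for being a multiplication hypermodule, namely Theorem~3.8 of \cite{sorc2}: writing $N:=H\cap K$, which is an $(m,n)$-subhypermodule of $M$ (it is nonempty since $0\in H\cap K$), it suffices to show that for every maximal hyperideal $P$ of $R$, either $N=\mathcal{X}_P(N)$ or $N$ is $n$-ary $P$-cyclic. The observation that makes the bookkeeping painless is that whether $x\in\mathcal{X}_P(\,\cdot\,)$ depends only on $x$, $R$ and $P$; hence for subhypermodules $N'\subseteq N''$ one has $N'\cap\mathcal{X}_P(N'')=\mathcal{X}_P(N')$. So, fixing $P$: if $H=\mathcal{X}_P(H)$, or $K=\mathcal{X}_P(K)$, or $f(H,K,0^{(m-2)})=\mathcal{X}_P(f(H,K,0^{(m-2)}))$, then intersecting with $N$ (using $N\subseteq H$, $N\subseteq K$, $N\subseteq f(H,K,0^{(m-2)})$ respectively) gives $N=\mathcal{X}_P(N)$, and we are done for that $P$.

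Assume now that none of these three equalities holds. By Theorem~3.8 of \cite{sorc2} applied to the multiplication hypermodules $H$ and $K$, both are $n$-ary $P$-cyclic; in particular there are $b\in K$ and $q_2\in P$ with $g(h(1,-q_2,0^{(m-2)}),1^{(n-2)},K)\subseteq g(R,1^{(n-2)},b)$, and symmetrically some $a\in H$, $q_1\in P$ for $H$. Writing $M':=f(H,K,0^{(m-2)})$ (the subhypermodule $H+K$, which is multiplication by hypothesis) and presenting it as $M'=f(N_1,\dots,N_m)$ with $N_1=H$, $N_2=K$, $N_3=\dots=N_m=\{0\}$ (all multiplication subhypermodules of $M'$), the implication $(1)\Rightarrow(4)$ of Theorem~\ref{21}, applied to $M'$ and $P$ together with the assumption $M'\neq\mathcal{X}_P(M')$, produces $c\in H\cup K$ and $p\in P$ with $g(h(1,-p,0^{(m-2)}),1^{(n-2)},M')\subseteq g(R,1^{(n-2)},c)$.

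By symmetry (interchanging the roles of $H,q_1,a$ and $K,q_2,b$) we may assume $c\in H$. Choose any $b_0\in g(h(1,-p,0^{(m-2)}),1^{(n-2)},b)$. Since $b\in K$ we have $b_0\in g(R,1^{(n-2)},b)\subseteq K$, while, since $K\subseteq M'$ and $c\in H$, $b_0\in g(h(1,-p,0^{(m-2)}),1^{(n-2)},K)\subseteq g(R,1^{(n-2)},c)\subseteq H$; hence $b_0\in N$. Now for any $y\in N\subseteq K$ we have $g(h(1,-q_2,0^{(m-2)}),1^{(n-2)},y)\subseteq g(R,1^{(n-2)},b)$; applying $g(h(1,-p,0^{(m-2)}),1^{(n-2)},-)$, moving scalars through $g$ via the associativity conditions of the hypermodule axioms, and using the conventions on $g(I,1^{(n-2)},m)$ from the preliminaries, we get
\[ g(k(h(1,-q_2,0^{(m-2)}),h(1,-p,0^{(m-2)}),1^{(n-2)}),1^{(n-2)},N)\subseteq g(R,1^{(n-2)},b_0). \]
Expanding $k(h(1,-q_2,0^{(m-2)}),h(1,-p,0^{(m-2)}),1^{(n-2)})$ by distributivity and using that $P$ is a hyperideal, this set is a union of sets of the form $h(1,-w,0^{(m-2)})$ with $w\in P$; choosing one such $w=:q$, we obtain $g(h(1,-q,0^{(m-2)}),1^{(n-2)},N)\subseteq g(R,1^{(n-2)},b_0)$ with $b_0\in N$, so $N$ is $n$-ary $P$-cyclic.

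Since $P$ was arbitrary, Theorem~3.8 of \cite{sorc2} yields that $N=H\cap K$ is a multiplication $(m,n)$-hypermodule. I expect the only real obstacle to be the computations of the third paragraph: pushing the ``scalar'' $h(1,-p,0^{(m-2)})$ past the local generator $b$ of $K$, and recognizing a product of two scalars of the form $h(1,-(\text{element of }P),0^{(m-2)})$ as again a union of such scalars. These are precisely the manipulations that the conventions placed on $g(I,1^{(n-2)},m)$ and on $h(r,-r,0^{(m-2)})$ in the preliminaries are meant to license; granting them, the argument is the localization-free shadow of the classical fact that if $H$, $K$ and $H+K$ are all cyclic at a maximal ideal $P$, then one of $H_P$, $K_P$ contains the other, whence $(H\cap K)_P$ is cyclic.
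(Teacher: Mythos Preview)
Your proposal is correct and follows essentially the same route as the paper's proof: both argue via the local criterion of Theorem~3.8 in \cite{sorc2}, reduce to the case where $H$, $K$ and $f(H,K,0^{(m-2)})$ are all $n$-ary $P$-cyclic, invoke Theorem~\ref{21} to obtain a generator $c\in H\cup K$ for the sum, and then (assuming $c\in H$) push the $K$-generator $b$ into $H\cap K$ by multiplying by the scalar coming from the sum and combine the two scalars $h(1,-p,0^{(m-2)})$ and $h(1,-q_2,0^{(m-2)})$ into a single one via distributivity. The paper simply writes out explicitly the identity you summarize as ``a union of sets of the form $h(1,-w,0^{(m-2)})$ with $w\in P$'': it picks $s\in h(p',p'',-k(p',p'',1^{(n-2)}),0^{(m-3)})\subseteq P$ and verifies step by step that $h(1,-s,0^{(m-2)})\subseteq k(h(1,-p',0^{(m-2)}),h(1,-p'',0^{(m-2)}),1^{(n-2)})$, which is exactly the containment your argument needs.
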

\begin{proof}
Let $\mathcal{X}_P(H \cap K) \neq H \cap K$ for some maximal hyperideal $P$ of $R$. Then we conclude that $\mathcal{X}_P(H) \neq H $, $\mathcal{X}_P( K) \neq  K$ and $\mathcal{X}_P(f(H,K,0^{(m-2)})) \neq f(H,K,0^{(m-2)})$. By Theorem 3.8 in \cite{sorc2}  and Theorem \ref{21}, there exist $a \in H$, $b \in K$, $c \in H \cup K$ and $p,p^\prime ,p^{\prime \prime} \in P$ such that

$\hspace{1cm} g(h(1,-p,0^{(m-2)}),1^{(n-2)},H) \subseteq g(R,1^{(n-2)},a)$

$\hspace{1cm} g(h(1,-p^\prime,0^{(m-2)}),1^{(n-2)},K) \subseteq g(R,1^{(n-2)},b)$\\
and 

$\hspace{1cm} g(h(1,-p^{\prime \prime},0^{(m-2)}),1^{(n-2)},f(H,K,0^{(m-2)})) \subseteq g(R,1^{(n-2)},c)$.\\
If $c \in H$ then \\

$\hspace{0.3cm} g(h(1,-p^{\prime \prime},0^{(m-2)}),1^{(n-2)},b) \subseteq g(h(1,-p^{\prime \prime},0^{(m-2)}),1^{(n-2)},f(H,K,0^{(m-2)}))$

$ \hspace{4.9cm}\subseteq g(R,1^{(n-2)},c)$

$ \hspace{4.9cm} \subseteq H$\\
which implies 

$\hspace{1cm} g(h(1,-p^{\prime \prime},0^{(m-2)}),1^{(n-2)},b) \subseteq H \cap K$.\\
Let $s \in h(p^{\prime},p^{\prime \prime},-k(p^{\prime},p^{\prime \prime},1^{(n-2)}),0^{(m-3)}) \subseteq  P$. Then\\
$ g(h(1,-s,0^{(m-2)}),1^{(n-2)},H \cap K)$

$  \subseteq g(h(1,-(h(p^{\prime},p^{\prime \prime},-k(p^{\prime},p^{\prime \prime},1^{(n-2)}),0^{(m-3)}),0^{(m-2)}),1^{(n-2)},H \cap K)$

$=g(h(1,-p^{\prime},-p^{\prime \prime},k(p^{\prime},p^{\prime \prime},1^{(n-2)}),0^{(m-4)}),1^{(n-2)},H \cap K)$

$=g(h(1,-p^{\prime},h(-p^{\prime \prime},k(p^{\prime},p^{\prime \prime},1^{(n-2)}),0^{(m-2)})),0^{(m-3)}),1^{(n-2)},H \cap K)$

$=g(h(1,-p^{\prime},h(-p^{\prime \prime},-(-k(p^{\prime},p^{\prime \prime},1^{(n-2)})),0^{(m-2)})),0^{(m-3)}),1^{(n-2)},H \cap K)$

$=g(h(1,-p^{\prime},h(-p^{\prime \prime},k(-p^{\prime},-p^{\prime \prime},1^{(n-2)})),0^{(m-2)})),0^{(m-3)}),1^{(n-2)},H \cap K)$

$=g(h(1,-p^{\prime},k(-p^{\prime \prime},h(1,-p^{\prime},0^{(m-2)}),1^{(n-2)})),0^{(m-3)}),1^{(n-2)},H \cap K)$

$=g(h(h(1,-p^{\prime},0^{(m-2)}),k(-p^{\prime \prime},h(1,-p^{\prime},0^{(m-2)}),1^{(n-2)})),0^{(m-2)}),1^{(n-2)},H \cap K)$

$=g(k(h(1,-p^{\prime},0^{(m-2)}),h(1,-p^{\prime \prime},0^{(m-2)}),1^{(n-2)}),1^{(n-2)},H \cap K)$

$ \subseteq g(h(1,-p^{\prime \prime },0^{(m-2)}),1^{(n-2)},g(h(1,-p^{\prime },0^{(m-2)}),1^{(n-2)},  K))$

$ \subseteq g(h(1,-p^{\prime \prime },0^{(m-2)}),1^{(n-2)},g(R,1^{(n-2)},b))$

$ = g(R,1^{(n-2)},g(h(1,-p^{\prime \prime },0^{(m-2)}),1^{(n-2)},b))$.\\
Then there exists  $m \in g(h(1,-p^{\prime \prime },0^{(m-2)}),1^{(n-2)},b) \subseteq H \cap K$ such that 

$g(h(1,-s,0^{(m-2)}),1^{(n-2)},H \cap K) \subseteq g(R,1^{(n-2)},m)$\\
which means $H \cap K$ is n-ary $P$-cyclic. Similarly, by $ c \in K$ it follows that $H \cap K$ is n-ary $P$-cyclic. Thus $H \cap K$ is a multiplication hypermodule, by Theorem 3.8 in \cite{sorc2}. 
\end{proof}
\begin{lem} \label{new1}
Let $(M,f,g)$  be an $(m,n)$-hypermodule over $(R,h,k)$. Let  subhypermodules $N_1$ and $N_2$ of $M$ be multiplication. If  $R=h(A,B,0^{(m-2)})$ such that $A=\{r \in R \ \vert \ g(r,1^{(n-2)},N_2) \subseteq N_1\}$ and $B=\{r \in R \ \vert \ g(r,1^{(n-2)},N_1) \subseteq N_2\}$, then $f(N_1,N_2,0^{(m-2)})$ is a multiplication $(m,n)$-hypermodules.
\end{lem}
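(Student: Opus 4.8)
The plan is to use the criterion from Theorem~\ref{21} (equivalently, the $P$-cyclic characterization of multiplication hypermodules from Theorem 3.8 in \cite{sorc2}), applied to the decomposition $f(N_1,N_2,0^{(m-2)})$ viewed as a sum of the two multiplication subhypermodules $N_1$ and $N_2$. Concretely, I would fix an arbitrary maximal hyperideal $P$ of $R$ and show that either $f(N_1,N_2,0^{(m-2)})=\mathcal{X}_P\big(f(N_1,N_2,0^{(m-2)})\big)$ or the module is $n$-ary $P$-cyclic; by Theorem 3.8 in \cite{sorc2} this yields that $f(N_1,N_2,0^{(m-2)})$ is a multiplication $(m,n)$-hypermodule.

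First I would handle the easy case: if $\mathcal{X}_P(N_1)=N_1$ and $\mathcal{X}_P(N_2)=N_2$, then $\mathcal{X}_P$ commutes with the $m$-ary sum (as used already in the proof of Theorem~\ref{19} and Theorem~\ref{new3}), so $f(N_1,N_2,0^{(m-2)})=\mathcal{X}_P\big(f(N_1,N_2,0^{(m-2)})\big)$ and we are done. So assume at least one of them, say $\mathcal{X}_P(N_1)\neq N_1$; then by Theorem 3.8 in \cite{sorc2} (multiplication $\Rightarrow$ $P$-cyclic when $\mathcal{X}_P\neq$ whole module) there exist $a\in N_1$ and $p\in P$ with $g(h(1,-p,0^{(m-2)}),1^{(n-2)},N_1)\subseteq g(R,1^{(n-2)},a)$. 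Next I would exploit the hypothesis $R=h(A,B,0^{(m-2)})$: since $A\not\subseteq P$ or $B\not\subseteq P$ (otherwise $h(A,B,0^{(m-2)})\subseteq P\neq R$), one of the ideals $A,B$ escapes $P$. Using this together with the definitions of $A$ and $B$, I would show that $g(h(1,-q,0^{(m-2)}),1^{(n-2)},N_2)\subseteq N_1$ (or the symmetric statement) for a suitable $q\in P$ with $h(1,-q,0^{(m-2)})\subseteq B$; this transfers the $P$-cyclicity from $N_1$ to $N_2$ in the spirit of the computation at the end of the proof of Theorem~\ref{new3}, where a single ``difference of two units'' $h(1,-s,0^{(m-2)})$ with $s\in h(p,q,-k(p,q,1^{(n-2)}),0^{(m-3)})\subseteq P$ is produced so that $k(h(1,-p,0^{(m-2)}),h(1,-q,0^{(m-2)}),1^{(n-2)})\subseteq$ the relevant annihilator-type set.

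Then I would assemble the pieces: for $s$ as above, the chain
\[
g(h(1,-s,0^{(m-2)}),1^{(n-2)},f(N_1,N_2,0^{(m-2)}))
= f\big(g(h(1,-s,0^{(m-2)}),1^{(n-2)},N_1),\, g(h(1,-s,0^{(m-2)}),1^{(n-2)},N_2),0^{(m-2)}\big)
\]
reduces, after substituting $h(1,-s,0^{(m-2)})$ by the product $k(h(1,-p,0^{(m-2)}),h(1,-q,0^{(m-2)}),1^{(n-2)})$ and using $g(h(1,-p,0^{(m-2)}),1^{(n-2)},N_1)\subseteq g(R,1^{(n-2)},a)$ together with the transferred inclusion for $N_2$, to a subset of $g(R,1^{(n-2)},a)$. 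Since $a\in N_1\subseteq f(N_1,N_2,0^{(m-2)})$, this exhibits $f(N_1,N_2,0^{(m-2)})$ as $n$-ary $P$-cyclic, completing the argument via Theorem 3.8 in \cite{sorc2}. I would treat the case $\mathcal{X}_P(N_2)\neq N_2$ symmetrically (the roles of $A$ and $B$ swap).

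The main obstacle will be the bookkeeping in the ``unit-transfer'' step: showing cleanly that from $R=h(A,B,0^{(m-2)})$ and $P$ maximal one extracts an element $h(1,-q,0^{(m-2)})\subseteq B$ (or $\subseteq A$) and that the product $k(h(1,-p,0^{(m-2)}),h(1,-q,0^{(m-2)}),1^{(n-2)})$ is contained in $h(1,-s,0^{(m-2)})$ for some $s\in P$ — this is exactly the kind of $m$-ary/$n$-ary distributivity manipulation carried out in the proof of Theorem~\ref{new3}, and the same identities ($0\in h(b,-b,0^{(m-2)})$, distributivity of $k$ over $h$, $k(-x,-y,1^{(n-2)})=k(x,y,1^{(n-2)})$) should push it through. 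Everything else is a routine application of the already-established $P$-local criterion for multiplication hypermodules.
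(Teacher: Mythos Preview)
Your overall strategy---verify the $P$-local criterion of Theorem~3.8 in \cite{sorc2} for $L:=f(N_1,N_2,0^{(m-2)})$---is the right one, and it is exactly what underlies the paper's argument; but the paper gets there in one line, and your case analysis has a gap. The paper simply observes that, computed relative to the hypermodule $L$, one has $A\subseteq S_{N_1}$ and $B\subseteq S_{N_2}$ (for $a\in A$, $g(a,1^{(n-2)},L)=f(g(a,1^{(n-2)},N_1),g(a,1^{(n-2)},N_2),0^{(m-2)})\subseteq f(N_1,N_1,0^{(m-2)})=N_1$, and symmetrically). Hence $R=h(A,B,0^{(m-2)})\subseteq h(S_{N_1},S_{N_2},0^{(m-2)})$, and Corollary~\ref{22} applied to $L$ with its two multiplication subhypermodules $N_1,N_2$ gives the result immediately. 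All of the $P$-cyclic bookkeeping you sketch is already packaged inside Theorem~\ref{21}, on which Corollary~\ref{22} rests, so there is no need to redo it.

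The concrete gap in your sketch is this. After assuming $\mathcal{X}_P(N_1)\neq N_1$ you get that $N_1$ is $n$-ary $P$-cyclic with generator $a\in N_1$, and from $R=h(A,B,0^{(m-2)})$ you conclude $A\not\subseteq P$ or $B\not\subseteq P$. If $A\not\subseteq P$ then some $h(1,-q,0^{(m-2)})$ lies in $A$, so $g(h(1,-q,0^{(m-2)}),1^{(n-2)},N_2)\subseteq N_1$, and your product trick indeed pushes $g(h(1,-s,0^{(m-2)}),1^{(n-2)},L)$ into $g(R,1^{(n-2)},a)$. But if only $B\not\subseteq P$, then $h(1,-q,0^{(m-2)})\subseteq B$ gives $g(h(1,-q,0^{(m-2)}),1^{(n-2)},N_1)\subseteq N_2$ (note your sentence swaps $A$ and $B$ here), which sends $N_1$ into $N_2$; to finish you would now need $N_2$ to be $P$-cyclic, i.e.\ $\mathcal{X}_P(N_2)\neq N_2$, and you have not established this. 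That missing step is precisely the extra argument carried out in the proof of Theorem~\ref{21}\,$(3)\Rightarrow(4)$ (where one shows $S_{N_\lambda}\not\subseteq P$ forces $\mathcal{X}_P(N_\lambda)\neq N_\lambda$), so rather than reproducing it you should just invoke Corollary~\ref{22} as the paper does.
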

\begin{proof}
This follows from Corollary \ref{22}.
\end{proof}
\begin{theorem} \label{new2}
Let $(M,f,g)$  be an $(m,n)$-hypermodule over $(R,h,k)$. Let $L,N_1,...,N_t$  be subhypermodules of
$M$ such that $L$, $N_i$, $f(L,N_i,0^{(m-2)})$ $(1 \leq i \leq t)$ and $N=\bigcap_{i=1}^t N_i$    are multiplication $(m,n)$-hypermodules. Then $f(L,N,0^{(m-2)})$ is a multiplication $(m,n)$-hypermodule.
\end{theorem}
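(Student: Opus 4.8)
The plan is to deduce the statement from Theorem~\ref{21}. Observe that $f(L,N,0^{(m-2)})$ is precisely the $m$-ary sum of the multiplication subhypermodules $L$ and $N$ (padded with copies of the zero subhypermodule, which is trivially a multiplication subhypermodule), so Theorem~\ref{21} applies to $M':=f(L,N,0^{(m-2)})$ with this decomposition. Hence it is enough to verify condition $(4)$ of that theorem: for every maximal hyperideal $P$ of $R$ either $M'=\mathcal{X}_P(M')$, or there are $a\in L\cup N$ and $p\in P$ with $g(h(1,-p,0^{(m-2)}),1^{(n-2)},M')\subseteq g(R,1^{(n-2)},a)$.

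So I would fix a maximal hyperideal $P$ and assume $M'\neq\mathcal{X}_P(M')$. Put $M_i=f(L,N_i,0^{(m-2)})$. Since $M'\subseteq M_i$ and $\mathcal{X}_P(M_i)\cap M'=\mathcal{X}_P(M')$, one gets $M_i\neq\mathcal{X}_P(M_i)$ for every $i$; as each $M_i$ is a multiplication hypermodule that is the $m$-ary sum of the multiplication subhypermodules $L$ and $N_i$, condition $(4)$ of Theorem~\ref{21} (applied to $M_i$) provides $a_i\in L\cup N_i$ and $p_i\in P$ with $g(h(1,-p_i,0^{(m-2)}),1^{(n-2)},M_i)\subseteq g(R,1^{(n-2)},a_i)$. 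If $a_{i_0}\in L$ for some $i_0$, then restricting the inclusion to $M'\subseteq M_{i_0}$ already gives condition $(4)$. Otherwise $a_i\in N_i$ for every $i$, so $g(h(1,-p_i,0^{(m-2)}),1^{(n-2)},L)\subseteq g(R,1^{(n-2)},a_i)\subseteq N_i$, and thus each hyperideal $B_i:=\{r\in R\ \vert\ g(r,1^{(n-2)},L)\subseteq N_i\}$ contains $h(1,-p_i,0^{(m-2)})$ and is therefore not contained in $P$; since $\bigcap_i B_i=\{r\in R\ \vert\ g(r,1^{(n-2)},L)\subseteq N\}$ contains the product $B_1\cdots B_t$ and $P$ is prime, it follows that $\bigcap_iB_i\nsubseteq P$, so there is $p'\in P$ with $g(h(1,-p',0^{(m-2)}),1^{(n-2)},L)\subseteq N$.

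To finish I would use that $N$ is multiplication, so by Theorem 3.8 in \cite{sorc2} either $N=\mathcal{X}_P(N)$ or $N$ is $n$-ary $P$-cyclic. The first option is impossible here: $g(h(1,-p',0^{(m-2)}),1^{(n-2)},L)\subseteq\mathcal{X}_P(N)\subseteq\mathcal{X}_P(M')$ would force $L\subseteq\mathcal{X}_P(M')$ and hence $M'=f(L,N,0^{(m-2)})\subseteq\mathcal{X}_P(M')$, contrary to our assumption. So $N$ is $n$-ary $P$-cyclic, say $g(h(1,-q,0^{(m-2)}),1^{(n-2)},N)\subseteq g(R,1^{(n-2)},b)$ with $b\in N$ and $q\in P$. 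Choosing $\hat p\in P$ with $h(1,-\hat p,0^{(m-2)})=k(h(1,-p',0^{(m-2)}),h(1,-q,0^{(m-2)}),1^{(n-2)})$ — the two-element combination already used in the proofs of Theorems~\ref{new3} and \ref{21} — a direct computation with the distributivity axioms gives $g(h(1,-\hat p,0^{(m-2)}),1^{(n-2)},L)\subseteq g(R,1^{(n-2)},b)$ and $g(h(1,-\hat p,0^{(m-2)}),1^{(n-2)},N)\subseteq g(R,1^{(n-2)},b)$, hence $g(h(1,-\hat p,0^{(m-2)}),1^{(n-2)},M')\subseteq g(R,1^{(n-2)},b)$ with $b\in L\cup N$; this is exactly condition $(4)$. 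The step I expect to be most delicate is the passage from ``$B_i\nsubseteq P$ for each $i$'' to ``$\bigcap_i B_i\nsubseteq P$'' (which rests on the inclusion of the product of hyperideals in their intersection together with the primeness of $P$) and the bookkeeping needed to rule out the case $N=\mathcal{X}_P(N)$; the rest is the routine combination trick that recurs throughout the paper.
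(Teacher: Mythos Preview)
Your argument is correct and runs parallel to the paper's, but organises the appeal to Theorem~\ref{21} differently. The paper works with the hyperideals $A_i=\{r: g(r,1^{(n-2)},N_i)\subseteq L\}$, $B_i=\{r: g(r,1^{(n-2)},L)\subseteq N_i\}$ and $C=\{r: g(r,1^{(n-2)},N)\subseteq L\}$: from $\mathcal{X}_P(M_i)\neq M_i$ it extracts $h(A_i,B_i,0^{(m-2)})\nsubseteq P$ (this is condition~(3) of Theorem~\ref{21} in disguise), uses $A_i\subseteq C$ to get $h(B_i,C,0^{(m-2)})\nsubseteq P$, and then the primeness step you single out to obtain $h(B,C,0^{(m-2)})\nsubseteq P$ with $B=\bigcap_i B_i$. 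From there it finishes in one stroke via Lemma~\ref{new1} (i.e.\ Corollary~\ref{22}), avoiding any separate discussion of whether $N=\mathcal{X}_P(N)$. Your route through condition~(4) is more elementwise: you produce witnesses $a_i$, split on whether some $a_i$ lies in $L$, and in the remaining case must argue that $N$ is $n$-ary $P$-cyclic. The extra case you have to exclude, $N=\mathcal{X}_P(N)$, does go through---under the paper's standing hypotheses $h(1,-p,0^{(m-2)})$ is a singleton, so from $g(h(1,-p',0^{(m-2)}),1^{(n-2)},x)\subseteq\mathcal{X}_P(N)$ one gets $x\in\mathcal{X}_P(M')$ by the same product-of-$(1-p)$ manipulation used in Theorems~\ref{21} and~\ref{new3}---but you should spell this out rather than leave it as ``would force''. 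What your approach buys is that it never leaves condition~(4), so it is self-contained once Theorem~\ref{21} is in hand; what the paper's approach buys is that the symmetric hyperideal formulation handles both summands $L$ and $N$ at once and sidesteps the $\mathcal{X}_P(N)$ dichotomy entirely.
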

\begin{proof}
Let  $\mathcal{X}_P (f(L,N,0^{(m-2)})) \neq   f(L,N,0^{(m-2)})$  for some maximal hyperideal $P$ of $R$. Then $\mathcal{X}_P(f(L,N_i,0^{(m-2)})
\neq f(L,N_i,0^{(m-2)})$ for each $1 \leq i \leq t$, since $f(L,N,0^{(m-2)}) \subseteq f(L,N_i,0^{(m-2)})$. By  Theorem  \ref{21} we conclude that  $h(A_i,B_i,0^{(m-2)}) \nsubseteq P$ with $A_i=\{r \in R \ \vert \ g(r,1^{(n-2)},N_i) \subseteq L\} $ and $B_i=\{r \in R \ \vert \ g(r,1^{(n-2)},L) \subseteq N_i\} $. Let $C=\{r \in R \ \vert \ g(r,1^{(n-2)},N) \subseteq L\} $. Then $A_i \subseteq C$. Therefore we have $h(B_i,C,0^{(m-2)}) \nsubseteq P$ for each $1 \leq i \leq t$. Let $B=\{r \in R \ \vert \ g(r,1^{(n-2)},L) \subseteq N\} $. Then we get 
$h(B,C,0^{(m-2)})=h(\bigcap_{i=1}^t B_i,C,0^{(m-2)}) \nsubseteq P$ which means $B \nsubseteq P$ or $C \nsubseteq P$. Hence there exists $p \in P$ such that $h(1,-p,0^{(m-2)}) \subseteq B \cup C=B^{\prime} \cup C^{\prime}$ with $ B^{\prime}= \{r \in R \ \vert \ g(r,1^{(n-2)},f(L,N,0^{(m-2)})) \subseteq N \}$ and $ C^{\prime}=\{r \in R \ \vert \ g(r,1^{(n-2)},f(L,N,0^{(m-2)})) \subseteq L\}$. Then we have 

$\hspace{0.5cm} g(h(1,-p,0^{(m-2)}),1^{(n-2)},f(L,N,0^{(m-2)})) \subseteq L$\\ or 

$\hspace{0.5cm} g(h(1,-p,0^{(m-2)}),1^{(n-2)},f(L,N,0^{(m-2)})) \subseteq N$.\\ Since $L$ and $N$ are multiplication $(m,n)$-hypermodules, then by Lemma  \ref{new1}, we conclude that $f(L,\bigcap_{i=1}^t N_i,0^{(m-2)})=f(L,N,0^{(m-2)})$ is a multiplication $(m,n)$-hypermodule. 
\end{proof}

\begin{theorem} \label{new4}
Let $(M,f,g)$  be an $(m,n)$-hypermodule over $(R,h,k)$. Let $N_1,...,N_n$ be subhypermodules of
$M$ such that for all $1 \leq i <  j \leq n$, $f(N_i,N_j,0^{(m-2)})$ is a  multiplication $(m,n)$-hypermodule. Then $N_1,...,N_n$ are multiplication $(m,n)$-hypermodules if and only if $N=\bigcap_{i=1}^n N_i$ is a  multiplication $(m,n)$-hypermodule. 
\end{theorem}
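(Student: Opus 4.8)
The plan is to prove the two implications separately, both by reduction to the description of multiplication $(m,n)$-hypermodules in terms of the sets $\mathcal{X}_P$ and $n$-ary $P$-cyclicity (Theorem 3.8 in \cite{sorc2}). The forward direction will be a short induction resting on Theorems~\ref{new2} and \ref{new3}; the converse, handled one index $i$ and one maximal hyperideal $P$ at a time, is the substantive part.

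For $(\Rightarrow)$, assume $N_1,\dots,N_n$ are all multiplication and induct on $n$. The case $n=2$ is Theorem~\ref{new3} applied to $H=N_1$, $K=N_2$, since $N_1$, $N_2$ and $f(N_1,N_2,0^{(m-2)})$ are multiplication. For $n>2$, the family $N_1,\dots,N_{n-1}$ satisfies the same hypotheses, so $N'=\bigcap_{i=1}^{n-1}N_i$ is multiplication by induction; Theorem~\ref{new2} with $L=N_n$ and the family $N_1,\dots,N_{n-1}$, whose hypotheses (that $N_n$, each $N_i$, each $f(N_n,N_i,0^{(m-2)})$ and $N'$ be multiplication) all hold, shows that $f(N_n,N',0^{(m-2)})$ is multiplication; and Theorem~\ref{new3} with $H=N'$, $K=N_n$ then gives that $N'\cap N_n=\bigcap_{i=1}^nN_i=N$ is multiplication.

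For $(\Leftarrow)$, assume $N=\bigcap_{i=1}^nN_i$ is multiplication, fix an index $i$, and, by Theorem 3.8 in \cite{sorc2}, fix a maximal hyperideal $P$ with $N_i\nsubseteq\mathcal{X}_P(M)$; we must show $N_i$ is $n$-ary $P$-cyclic. For each $j\neq i$, the inclusion $N_i\subseteq f(N_i,N_j,0^{(m-2)})$ gives $f(N_i,N_j,0^{(m-2)})\nsubseteq\mathcal{X}_P(M)$, and since this sum is multiplication, Theorem~\ref{21}(4) (applied to $f(N_i,N_j,0^{(m-2)})$ viewed as $f_{(1)}$ of the $m$-tuple $N_i,N_j,0,\dots,0$) provides $p_j\in P$ and $c_j\in N_i\cup N_j$ with $g(h(1,-p_j,0^{(m-2)}),1^{(n-2)},f(N_i,N_j,0^{(m-2)}))\subseteq g(R,1^{(n-2)},c_j)$. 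This dichotomy is the hyperstructure counterpart of the fact that, over a local ring, if $A+B$ is cyclic then $A$ equals $A+B$ or $B$ equals $A+B$. If $c_j\in N_i$ for some $j$, restricting the inclusion to $N_i$ exhibits $N_i$ as $n$-ary $P$-cyclic. If instead $c_j\in N_j$ for every $j\neq i$, then $g(h(1,-p_j,0^{(m-2)}),1^{(n-2)},N_i)\subseteq N_j$ for all such $j$; folding the finitely many $p_j$ into a single $p\in P$ by the device $h(p_j,p_k,-k(p_j,p_k,1^{(n-2)}),0^{(m-3)})\subseteq P$ familiar from the proofs of Theorems~\ref{21} and \ref{new3}, we get $g(h(1,-p,0^{(m-2)}),1^{(n-2)},N_i)\subseteq N_i\cap\bigcap_{j\neq i}N_j=N$. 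Since $N$ is multiplication, it is $n$-ary $P$-cyclic or satisfies $\mathcal{X}_P(N)=N$; in the first case one more combination of $p$ with the witness of $N$ transports $n$-ary $P$-cyclicity, with a generator in $N\subseteq N_i$, up to $N_i$, and the second case is excluded since $g(h(1,-p,0^{(m-2)}),1^{(n-2)},N_i)\subseteq N\subseteq\mathcal{X}_P(M)$ and the ``invertibility modulo $P$'' of $h(1,-p,0^{(m-2)})$ would then force $N_i\subseteq\mathcal{X}_P(M)$, against the choice of $P$.

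I expect the main obstacles to lie in $(\Leftarrow)$. First, the dichotomy $c_j\in N_i\cup N_j$: Theorem~\ref{21}(4) is stated for sums of multiplication subhypermodules, while here $N_i$ and $N_j$ are not yet known to be multiplication, so one must check that the relevant half of Theorem 3.8 in \cite{sorc2} still applies to $f(N_i,N_j,0^{(m-2)})$, or argue on that sum directly. Second, the descent ``$g(h(1,-p,0^{(m-2)}),1^{(n-2)},N_i)\subseteq\mathcal{X}_P(M)\Rightarrow N_i\subseteq\mathcal{X}_P(M)$''; this is where the paper's standing conventions, namely $\mathcal{X}_P=\mathcal{X}^P$ and the single-valuedness of $g(a,1^{(n-2)},-)$, become essential, since they are what turns the hyperoperation manipulation into the clean local computation that $1-p$ is a unit, hence $(N_i)_P=N_P$. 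The remaining steps are bookkeeping with the distributivity and associativity identities already exercised in Theorems~\ref{21}, \ref{new2} and \ref{new3}.
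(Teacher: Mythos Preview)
Your proposal is correct and follows essentially the same route as the paper: the forward direction by induction via Theorems~\ref{new2} and \ref{new3}, and the converse by fixing a maximal hyperideal $P$ with $\mathcal{X}_P(N_i)\neq N_i$, extracting from each multiplication sum $f(N_i,N_j,0^{(m-2)})$ a generator $c_j\in N_i\cup N_j$, and in the all-$c_j\in N_j$ case pushing $N_i$ into $N$ and invoking Theorem~3.8 of \cite{sorc2} together with the combination argument from Theorem~\ref{21}. The concern you raise about applying Theorem~\ref{21}(4) when $N_i,N_j$ are not yet known to be multiplication is legitimate and is present in the paper's own proof as well; your suggested workaround (arguing on the multiplication sum directly, using that $N_i$ and $N_j$ are each of the form $g(I,1^{(n-2)},f(N_i,N_j,0^{(m-2)}))$ and that one of the two ideals must escape $P$) is the right way to close it.
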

\begin{proof}
$\Longrightarrow$ Let $N_1,...,N_n$ are multiplication $(m,n)$-hypermodules. By
induction on $n$, $N_2 \cap...\cap N_n$ is a multiplication $(m,n)$-hypermodule. Now, by using  Theorem \ref{new2} and Theorem \ref{new3}, the proof is completed.

$\Longleftarrow $ Let $N=\bigcap_{i=1}^n N_i$ be a multiplication $(m,n)$-hypermodule. Assume that for some maximal hyperideal $P$ of $R$, $\mathcal{X}_P(N_1) \neq N_1$. This implies that for all $2 \leq i \leq n$, $\mathcal{X}_P (f(N_1,N_i,0^{(m-2)})) \neq f(N_1,N_i,0^{(m-2)})$.  Then there exist $x_i \in N_1 \cup N_i$ and $p_i \in P$ such that 
$g(h(1,-p_i,1^{(m-2)}),1^{(n-2)},f(N_1,N_i,0^{(m-2)})) \subseteq g(R,1^{(n-2)},x_i)$, by Theorem \ref{21}. If $x_i \in N_1$ then $N_1$ is an n-ary $P$-cyclic. If $x_i \in N_i$ $(2 \leq i \leq n )$, then 

$g(h(1,-{p_2},0^{(m-2)}),...,h(1,-p_n,0^{(m-2)}),N_1) \subseteq \bigcap_{i=1}^n N_i$.\\
By Theorem 3.8 in \cite{sorc2} and using an argument similar to that in the proof of Theorem \ref{21} $((3) \Longrightarrow (4))$, we conclude  that $N_1$ is multiplication. Similarly, we can show that for all $2 \leq i \leq n$, $N_i$ is multiplication. 
\end{proof}
\begin{corollary}
Let $(M,f,g)$  be an $(m,n)$-hypermodule over $(R,h,k)$ such that subhypermodules $N_1,...,N_n,N_{n+1},...,N_k$ of $M$ are  multiplication. If for all $1 \leq i <  j \leq k$, $f(N_i,N_j,0^{(m-2)})$  is a multiplication $(m,n)$-hypermodule. Then $f(N, L,0^{(m-2)})$ is a multiplication $(m,n)$-hypermodule with $N=N_1 \cap ... \cap N_n$ and $L=N_{n+1}\cap...\cap N_k$.
\end{corollary}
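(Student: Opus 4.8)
The plan is to obtain the two ``outer'' intersections $N$ and $L$ from Theorem \ref{new4}, then manufacture the intermediate pairwise sums with Theorem \ref{new2}, and finally apply Theorem \ref{new2} once more. First I would apply Theorem \ref{new4} to the family $N_1,\dots,N_n$: each $N_i$ is a multiplication $(m,n)$-hypermodule and, by hypothesis, $f(N_i,N_j,0^{(m-2)})$ is a multiplication $(m,n)$-hypermodule for all $1\le i<j\le n$, so $N=\bigcap_{i=1}^n N_i$ is a multiplication $(m,n)$-hypermodule. The same reasoning applied to $N_{n+1},\dots,N_k$ (using the arbitrary-finite-family form in which Theorem \ref{new4} is actually established by induction on the number of subhypermodules, the case $k-n=1$ being trivial) shows that $L=\bigcap_{j=n+1}^k N_j$ is a multiplication $(m,n)$-hypermodule.

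Next, for each fixed $i\in\{1,\dots,n\}$ I would invoke Theorem \ref{new2} with $N_i$ in the role of $L$ and the list $N_{n+1},\dots,N_k$ in the role of $N_1,\dots,N_t$: indeed $N_i$ is multiplication, each $N_j$ with $n+1\le j\le k$ is multiplication, each $f(N_i,N_j,0^{(m-2)})$ is multiplication (these are precisely the hypothesized pairwise sums, since $i<j$), and $\bigcap_{j=n+1}^k N_j=L$ is multiplication by the first step. Theorem \ref{new2} then yields that $f(N_i,L,0^{(m-2)})$ is a multiplication $(m,n)$-hypermodule for every $1\le i\le n$.

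Finally I would apply Theorem \ref{new2} once more, now with $L$ in the role of $L$ and the list $N_1,\dots,N_n$ in the role of $N_1,\dots,N_t$, so that $N=\bigcap_{i=1}^n N_i$ plays the role of the intersection. All four hypotheses are now in hand: $L$ is multiplication, each $N_i$ is multiplication, $f(L,N_i,0^{(m-2)})$ is multiplication by the previous paragraph (using that $f$ is symmetric in its first two slots), and $N$ is multiplication by the first step. Hence $f(L,N,0^{(m-2)})$ is a multiplication $(m,n)$-hypermodule; since $(M,f)$ is a canonical (hence commutative) $m$-ary hypergroup, $f(N,L,0^{(m-2)})=f(L,N,0^{(m-2)})$, which completes the proof. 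The only point requiring care is organizational: checking that Theorem \ref{new4} is used in its finite-family form and that in both applications of Theorem \ref{new2} the hypothesis on pairwise sums is genuinely covered by the full range $1\le i<j\le k$ of the corollary's hypothesis; no new hyperstructure computation is needed beyond what Theorems \ref{new2} and \ref{new4} already provide.
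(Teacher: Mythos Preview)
Your proof is correct and follows essentially the same three-step strategy as the paper: one application of Theorem \ref{new4} to get the intersection(s), a first pass with Theorem \ref{new2} to produce the mixed sums $f(\text{block element},\text{other intersection},0^{(m-2)})$, and a second pass with Theorem \ref{new2} to finish. The only cosmetic difference is that the paper runs the argument with the roles of the two blocks swapped---it first shows $N$ is multiplication, then obtains $f(N,N_i,0^{(m-2)})$ for $n+1\le i\le k$, and finally concludes $f(N,L,0^{(m-2)})$---whereas you go through $L$ and $f(N_i,L,0^{(m-2)})$ first; your version is in fact slightly more explicit in verifying that the second intersection is multiplication before the final application of Theorem \ref{new2}.
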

\begin{proof}
By Theorem \ref{new4}, $N$ is a multiplication $(m,n)$-hypermodule. Theorem \ref{new2} shows that for each $n+1 \leq i \leq k$,  $f(N,N_i,0^{(m-2)})$ is a multiplication $(m,n)$-hypermodule. Thus $f(N, N_{n+1}\cap...\cap N_k,0^{(m-2)})$
is a multiplication $(m,n)$-hypermodule, by Theorem \ref{new2}.
\end{proof}

\end{document}